\newtheorem{lemma}{Lemma}[section]
\newtheorem{theorem}[lemma]{Theorem}
\newtheorem{corollary}[lemma]{Corollary}
\newtheorem{example}[lemma]{Example}
\numberwithin{equation}{section}
\newcommand{\ud}{\mathrm{d}}
\newcommand{\RR}{\mathbb{R}}
\newcommand{\f}{\frac}
\newcommand{\xx}{|x|^2}
\newcommand{\yy}{|y|^2}
\newcommand{\xy}{\langle x,y\rangle}
\newcommand{\pp}[2]{\frac{\partial{#1}}{\partial{#2}}}
\newcommand{\pppp}[4]%
  {\frac{\partial^3{#1}}{\partial{#2}\partial{#3}\partial{#4}}}
\newcommand{\p}{\phi}
\newcommand{\pbs}{\alpha\phi\left(b^2,s\right)}
\newcommand{\gab}{\alpha\phi\left(b^2,\frac{\beta}{\alpha}\right)}
\newcommand{\pt}{\phi_2}
\newcommand{\po}{\phi_1}
\newcommand{\ptt}{\phi_{22}}
\newcommand{\pot}{\phi_{12}}
\renewcommand{\a}{\alpha}
\renewcommand{\b}{\beta}
\newcommand{\ab}{(\alpha,\beta)}
\newcommand{\ba}{\bar\alpha}
\newcommand{\bb}{\bar\beta}
\newcommand{\aij}{a_{ij}}
\newcommand{\bi}{b_i}
\newcommand{\bj}{b_j}
\newcommand{\bij}{b_{i|j}}
\newcommand{\baij}{\bar a_{ij}}
\newcommand{\bbij}{\bar b_{i|j}}
\newcommand{\G}{{}^\alpha G^i}
\newcommand{\bG}{{}^{\bar\alpha}G^i}
\begin{document}
\title{Projectively flat general $\ab$-metrics with constant flag curvature}
\footnotetext{\emph{Keywords}:Finsler metric, general $\ab$-metric, flag curvature, projective flatness, deformation.
\\
\emph{Mathematics Subject Classification}: 53B40, 53C60.}

\author{Changtao Yu and Hongmei Zhu}

\date{January 17, 2015}
\maketitle

\begin{abstract}
In this paper we study the flag curvature of a new class of Finsler metrics called general $\ab$-metrics, which are defined by a Riemannian metric $\a$ and a $1$-form $\b$. The classification of such metrics with constant flag curvature are completely determined under some suitable conditions, which make them locally projectively flat. As a result, we construct many new projectively flat Finsler metrics with flag curvature $1$, $0$ and $-1$ in Section 9, all of which are of singularity at some directions. The simplest one is given by $F=\frac{(b\a+\b)^2}{\a}$ where $b=\|\b\|_\a$.
\end{abstract}

\section{Introduction}
In Finsler geometry, many important Finsler metrics with constant flag curvature are locally projectively flat. For example, the generalized Funk metrics
\begin{eqnarray}\label{funk}
F=\f{\sqrt{(1-\xx)\yy+\xy^2}}{1-\xx}\pm\left\{\f{\xy}{1-\xx}+\f{\langle a,y\rangle}{1+\langle a,x\rangle}\right\}
\end{eqnarray}
are locally projectively flat with constant flag curvature $K=-\frac{1}{4}$, where $a$ is a constant vector\cite{szm-pfrm}. (\ref{funk}) belong to a special class of Finsler metrics called Randers metrics given in the form $F=\a+\b$, where $\a$ is a Riemannian metric and $\b$ is a $1$-form. Moreover, the generalized Berwald's metrics
\begin{eqnarray}\label{berwald}
F=\f{((1+\langle a,x\rangle)(\sqrt{(1-\xx)\yy+\xy^2}+\xy)+(1-\xx)\langle a,y\rangle)^2}{(1-\xx)^2\sqrt{(1-\xx)\yy+\xy^2}}
\end{eqnarray}
are also locally projectively flat with constant flag curvature $K=0$\cite{szm-ygc}. (\ref{berwald}) belong to the so-called square metrics given in the form $F=\frac{(\a+\b)^2}{\a}$.\cite{szm-yct-oesm}

Both Randers metrics and square metrics belong to the metrical category called $\ab$-metrics, which are given in the form $F=\a\phi(\frac{\b}{\a})$, where $\phi(s)$ is a smooth function. In 2007, Li-Shen proved that except Riemannian metrics and locally Minkowskian metrics, any locally projectively flat $\ab$-metric with constant flag curvature $K$ is either locally isometric to a generalized Funk metric after a scaling when $K<0$, or locally isometric to a generalized Berwald's metric after a scaling when $K=0$\cite{LB}.

Randers metrics can be expressed in another famous form
\begin{eqnarray}\label{NP}
F=\frac{\sqrt{(1-b^2)\alpha^2+\beta^2 }}{1-b^2} +\frac{\beta}{1-b^2},
\end{eqnarray}
where $b:= \|\beta_x\|_{\alpha}$ is the length of $\b$. Combining with Bao-Robles-Shen's well-known classification result\cite{db-robl-szm-zerm} and the related discussions in \cite{szm-pfrm}, one can see that a Randers metric is locally projectively flat and of constant flag curvature if and only if $\a$ in (\ref{NP}) is locally projectively flat and $\b$ is closed and homothetic with respect to $\a$. Note that Beltrami's theorem says that a Riemannian metric is locally projectively flat if and only if it is of constant sectional curvature. So the above fact means that $\a$ and $\b$ satisfy
\begin{equation*}
{}^\alpha R^i{}_j=\mu(\a^2\delta^i{}_j-y^iy_j),\qquad\bij=c  \aij,
\end{equation*}
where $c$ is a constant.

Square metrics can also be expressed in another form
\begin{equation}\label{square}
F=\frac{(\sqrt{(1-b^2)\alpha^2+\beta^2}+\beta)^2}{(1-b^2)^2\sqrt{(1-b^2)\alpha^2+\beta^2}}.
\end{equation}
The first author showed that a square metric is locally projectively flat if and only if $\a$ in (\ref{square}) is locally projectively flat and $\b$ is closed and conformal with respect to $\a$\cite{yct-dhfp}, i.e.,
\begin{eqnarray}\label{condition}
{}^\alpha R^i{}_j=\mu(\a^2\delta^i{}_j-y^iy_j),\qquad\bij=c(x)\aij,
\end{eqnarray}
where $c(x)$ is a scalar function on the manifold. Later on, Z. Shen and the first author proved that a square metric is an Einstein metric if and only if $\a$ and $\b$ satisfy
\begin{equation*}
{}^\alpha Ric=0,\qquad\bij=c\aij,
\end{equation*}
where $c$ is a constant\cite{szm-yct-oesm}.

One can see from the above facts that the expressions (\ref{NP}) and (\ref{square}) have the advantage of clearly illuminating
the underlying geometry, although they are more complicated in algebraic form. This is a common phenomenon about $\ab$-metrics\cite{yct-odfa,yct-odfr}. Actually, both Randers metrics and square metrics belong to a larger class of Finsler metrics called general $\ab$-metrics, which are also defined by a Riemannian metric $\a$ and a $1$-form $\b$ and given in the form
\begin{eqnarray}\label{generalab}
F=\gab,
\end{eqnarray}
where $\phi(b^2,s)$ is a smooth function\cite{yct-zhm-onan}.

If $\phi=\phi(s)$ is independent of $b^2$, then $F=\a\phi(\frac{\b}{\a})$ is a $\ab$-metric. If $\a=|y|$, $\b=\xy$, then $F=|y|\phi(|x|^2,\frac{\xy}{|y|})$ is the so-called spherically symmetric Finsler metrics. Moreover, general $\ab$-metrics include part of Bryant's metrics and part of fourth root metrics. That is to say, general $\ab$-metrics make up of a much large class of Finsler metrics, which makes it possible to find out more Finsler metrics to be of great properties. For example, in $\ab$-metrics we cann't find out any non-Ricci flat Einstein metric unless it is of Randers type\cite{cst}. The main reason is that the category of $\ab$-metrics is a little small. If we search Einstein metrics in general $\ab$-metrics, then it is not hard to find out metrics with positive and negative Ricci constant\cite{szm-yct-oefm}.

Come back to our discussions. It is clear that the corresponding functions $\phi(b^2,s)$ of (\ref{NP}) and (\ref{square}) are given by $\phi=\frac{\sqrt{1-b^2+s^2}}{1-b^2} + \frac{s}{1-b^2}$ and $\phi=\frac{(\sqrt{1-b^2+s^2}+s)^2 }{(1-b^2)^2 \sqrt{1-b^2 +s^2 }}$ respectively. Moreover, both of them satisfy the following PDE:
\begin{eqnarray}\label{pde}
\phi_{22}=2(\phi_1-s\phi_{12}).
\end{eqnarray}
Here  $\phi_1$ means the derivation of $\phi$ with respect to the first variable $b^2$.

In fact, the first author proved that when dimension $n\geq3$, every non-trivial locally projectively flat $\ab$ metric can be reexpressed as a new form $F=\pbs$ such that the corresponding function $\phi$ satisfies (\ref{pde}), and at the same time $\a$ and $\b$ satisfy (\ref{condition})\cite{yct-dhfp}. We believe that it also holds for general $\ab$-metrics, although we don't still know how to prove it by now. Until now, it is known that if (\ref{condition}) holds, then the general $\ab$-metric $F=\pbs$ is locally projectively flat if and only if $\phi$ satisfies (\ref{pde})\cite{szm-yct-oefm}.

The aim of this paper is to study general $\ab$-metrics with constant flag curvature. It's worth mentioning here that L. Zhou proved an interesting result in 2010: if a square metric is of constant flag curvature, then it must be locally projectively flat\cite{zlf}. It is not true for Randers metrics, because there are many Randers metrics with constant flag curvature which are not locally projectively flat actually\cite{db-robl-szm-zerm}. Even so, we have reason to believe that Zhou's result holds for any non-Randers type general $\ab$-metrics. More specifically, we conjecture that except Randers metrics, there does not exist any non locally projectively flat regular general $\ab$-metric to be of constant flag curvature. Randers metrics are very particular, the key reason is that any Randers metric will still turn to be a Randers metric after navigation transformation\cite{db-robl-szm-zerm}, but for a non-Randers type general $\ab$-metric, it will not turn to be a general $\ab$-metric after navigation transformation in general\cite{yct-zhm-onan}.

Hence, we will discuss our problem under the assumption that $\a$ and $\b$ satisfy the conditions (\ref{condition}), and $\phi$ satisfies the condition (\ref{pde}). In this case, the corresponding general $\ab$-metric must be locally projectively flat. Moreover, Lemma \ref{c(x)} shows that the conformal factor $c(x)$ in (\ref{condition}) must satisfy $c^2=\kappa-\mu b^2$ for some constant $\kappa$.

To be more clear, let's illustrate our assumption in this paper again:

{\bf Assumption:} $\a$, $\b$ and $\phi$ satisfy
\begin{eqnarray}\label{conditions}
{}^\alpha R^i{}_j=\mu(\a^2\delta^i{}_j-y^iy_j),\qquad\bij=(\kappa-\mu b^2)\aij,\qquad\phi_{22}=2(\phi_1-s\phi_{12})
\end{eqnarray}
respectively.

We believe that such conditions are natural. Our main reason is that all the known general $\ab$-metrics with constant flag curvature, including Bryant's metrics which are not discussed above, can be reexpressed to fit it.

The general $\ab$-metrics $F=\a\phi(b^2,\frac{\b}{\a})$ with constant flag curvature under our assumption can be completely solved. Firstly, we have the following equivalent characterization.

\begin{theorem}\label{main1}
Let $F=\a\phi(b^2,\frac{\b}{\a})$ be a general $\ab$-metric on an $n$-dimensional manifold $M$ with $n\geq3$, where $\a$, $\b$ and $\p$ satisfy (\ref{conditions}). Then $F$ is of constant flag curvature $K$ if and only if the function $\p=\p(b^2,s)$ satisfies the following PDE:
\begin{eqnarray}\label{pde2}
( \kappa -\mu b^2) \left[\psi^2-(\psi_2+2s\psi_1)\right]+\mu s \psi + \mu =K\p^2,
\end{eqnarray}
where $\psi:=\frac{\pt+2s\po}{2\p}$.
\end{theorem}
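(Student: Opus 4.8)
\quad The plan is to exploit the fact that, under the Assumption (\ref{conditions}), the metric $F$ is already locally projectively flat by \cite{szm-yct-oefm}, and to feed this into the projective formula for the flag curvature. For a locally projectively flat Finsler metric the geodesic coefficients are $G^i = P\,y^i$ with projective factor $P = \frac{F_{x^k}y^k}{2F}$, and its flag curvature is the scalar
\[
K = \frac{P^2 - P_{x^k}y^k}{F^2}.
\]
Hence $F$ is of constant flag curvature $K$ if and only if $P^2 - P_{x^k}y^k = K F^2$ holds identically, and the whole argument reduces to computing the left-hand side in terms of $\phi$ and $\psi$.

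First I would work in local coordinates in which $\alpha$ is projectively flat; these exist by Beltrami's theorem, since the first relation in (\ref{conditions}) says $\alpha$ has constant sectional curvature $\mu$, so that $\G = {}^\alpha P\,y^i$. Writing $c=c(x)$ for the conformal factor, so that $\bij = c\,\aij$ and, by Lemma \ref{c(x)}, $c^2 = \kappa - \mu b^2$, short covariant-derivative computations using $\bij = c\,\aij$ and $s_{ij}=0$ give the three directional-derivative identities
\[
\alpha_{x^k}y^k = 2\,{}^\alpha P\,\alpha,\qquad (b^2)_{x^k}y^k = 2c\,\alpha s,\qquad s_{x^k}y^k = c\,\alpha,
\]
together with $c_{x^k}y^k = -\mu\,\alpha s$ obtained by differentiating $c^2 = \kappa - \mu b^2$.

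Substituting $F = \alpha\phi(b^2,s)$ into $P = \frac{F_{x^k}y^k}{2F}$ and using these identities, everything collapses to
\[
P = {}^\alpha P + c\,\alpha\,\psi,\qquad \psi = \frac{\phi_2 + 2s\phi_1}{2\phi}.
\]
Then I would form $P^2 - P_{x^k}y^k$. The two essential inputs are, first, that $\alpha$ has constant sectional curvature $\mu$, which in the same projective language reads $({}^\alpha P)^2 - ({}^\alpha P)_{x^k}y^k = \mu\alpha^2$, and, second, the directional derivatives above, which are used to expand $(c\,\alpha\,\psi)_{x^k}y^k$. The terms containing the unknown factor ${}^\alpha P$ cancel identically, and after dividing by $\alpha^2$ one obtains
\[
\frac{P^2 - P_{x^k}y^k}{\alpha^2} = \mu + \mu s\psi + c^2\left(\psi^2 - (\psi_2 + 2s\psi_1)\right).
\]
Equating this with $K\phi^2$ (recall $F^2 = \alpha^2\phi^2$) and replacing $c^2$ by $\kappa - \mu b^2$ yields precisely (\ref{pde2}); the equivalence follows because the right-hand side is a function of $(b^2,s)$ alone, so the flag curvature is constant and equal to $K$ exactly when this identity in $(b^2,s)$ holds.

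The main obstacle is bookkeeping rather than conceptual: deriving the three directional-derivative identities correctly --- in particular tracking the conformal factor through the relation $c^2 = \kappa - \mu b^2$ rather than through $c$ itself --- and then verifying that the ${}^\alpha P$-dependent terms cancel. That cancellation is the structural heart of the argument: it is what guarantees that the flag curvature depends only on $(b^2,s)$, and it is what converts the geometric condition ``$F$ has constant flag curvature'' into the single PDE (\ref{pde2}).
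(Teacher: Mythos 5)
Your proposal is correct and follows essentially the same route as the paper: both work in coordinates where $\a$ is projectively flat, establish $P=\theta+c\a\psi$ (the paper reads this off from the general spray formula (\ref{Gi}), you from $P=F_{x^k}y^k/(2F)$ after invoking the known projective flatness under (\ref{conditions})), and then apply (\ref{K}) using the same directional-derivative identities and the relation $\theta^2-\theta_{x^k}y^k=\mu\a^2$. The cancellation of the $\theta$-terms and the resulting formula $K\p^2=\mu+\mu s\psi+c^2[\psi^2-(\psi_2+2s\psi_1)]$ match the paper's computation exactly.
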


The Riemannian metrics $\a$ and $1$-forms $\b$ satisfying (\ref{condition}) have already been determined completely~(see (\ref{ab})). According to Theorem \ref{main1}, in order to determine the general $\ab$-metrics with constant flag curvature under our assumption, we only need to solve Equation (\ref{pde}) and (\ref{pde2}).

The case when $\kappa=0$ and $\mu=0$ is trivial, because in the case $\a$ is locally Euclidian and $\b$ is parallel with respect to $\a$. As a result, $F=\a\phi(b^2,\frac{\b}{\a})$ is locally Minkowskian and hence flat automatically for any suitable function $\phi(b^2,s)$.

When $\kappa\neq0$ and $\mu=0$, we have the following result.

\begin{theorem}\label{main2} Let $F=\a\phi(b^2,\frac{\b}{\a})$ be a general $\ab$-metric on an $n$-dimensional manifold $M$ with $n\geq3$, where $\a$, $\b$ and $\phi$ satisfy (\ref{conditions}) with $\kappa\neq0$ and $\mu=0$. Then $F$ is of constant flag curvature $K$ if and only if $\phi$ is given by one of the forms:
\begin{eqnarray}
\phi & = & \frac{1}{2\sqrt{-\sigma}} \frac{1}{\sqrt{C-b^2+s^2}\pm s},\label{solution1}\\
\phi & = & \frac{q(u)}{q^2(u) (D q(u)+ v)^2+\sigma},\label{solution2}
\end{eqnarray}
where $\sigma: = K/\kappa$, $u:=b^2-s^2$ and $v:=s$, the function $q(u)$ satisfies the following equation:
$$D^2  q^4+(u-C)q^2 -\sigma=0,$$
where $C$ and $D$ are constants.
\end{theorem}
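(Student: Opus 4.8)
The plan is to read off from Theorem \ref{main1} that, since the Assumption (\ref{conditions}) already supplies the projective-flatness PDE (\ref{pde}), the metric $F$ has constant flag curvature $K$ exactly when (\ref{pde2}) holds; for $\mu=0$ this reduces, after dividing by $\kappa\neq0$ and setting $\sigma=K/\kappa$, to $\psi^2-(\psi_2+2s\psi_1)=\sigma\phi^2$. So the whole problem is to solve (\ref{pde}) and this curvature equation simultaneously. The first move is the change of variables $u=b^2-s^2$, $v=s$ from the statement. A direct computation gives $\phi_1=\phi_u$ and $\phi_2=-2v\phi_u+\phi_v$, so the first-order operator $\partial_s+2s\,\partial_{b^2}$ collapses to $\partial_v$; in particular $\psi=\frac{\phi_2+2s\phi_1}{2\phi}=\frac{\phi_v}{2\phi}$ and $\psi_2+2s\psi_1=\psi_v$. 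In these coordinates (\ref{pde}) becomes $\phi_{vv}-2v\phi_{uv}-4\phi_u=0$, while the curvature equation becomes the single relation $\psi^2-\psi_v=\sigma\phi^2$.

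Next I would treat the curvature relation as an ODE in $v$, with $u$ a passive parameter. The substitution $g:=\phi^{-1/2}$ linearises its principal part and turns it into the Pinney-type equation $g_{vv}=\sigma g^{-3}$. Multiplying by $g_v$ and integrating in $v$ yields the first integral $g_v^2=P(u)-\sigma g^{-2}$, where the ``constant'' $P$ is an arbitrary function of $u$; a second quadrature gives, when $P\not\equiv0$, $g^2=P(v-v_0)^2+\sigma/P$ with a second free function $v_0=v_0(u)$, hence $\phi=\frac{P}{P^2(v-v_0)^2+\sigma}$. The degenerate branch $P\equiv0$ forces $\sigma<0$ and, after one quadrature, $g^2=2\sqrt{-\sigma}\,(v-v_0)$ up to sign, i.e. $\phi=\frac{1}{2\sqrt{-\sigma}\,(v-v_0)}$. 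The constant solutions of $g_{vv}=\sigma g^{-3}$ and the boundary case $\sigma=0$ are checked separately.

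Finally I would feed these $v$-solutions back into $\phi_{vv}-2v\phi_{uv}-4\phi_u=0$ to pin down the $u$-dependence. Writing $\tau=v-v_0$ and clearing denominators, the left-hand side becomes a polynomial in $\tau$ whose coefficients are expressions in $P,v_0$ and their $u$-derivatives; since it must vanish for all $v$, every coefficient vanishes. The coefficient of $\tau^3$ gives $Pv_0'-P'v_0=0$, whence $v_0=-Dq$ for a constant $D$ (writing $q:=P$); with this relation the coefficient of $\tau$ vanishes identically, and the coefficients of $\tau^2$ and $\tau^0$ both collapse to $q^3+2q'(\sigma+D^2q^4)=0$. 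Separating variables and integrating gives $D^2q^4+(u-C)q^2-\sigma=0$, which is precisely (\ref{solution2}). The same matching on the branch $P\equiv0$ reduces to $2v_0v_0'+1=0$, i.e. $v_0^2=C-u$, and substituting $\sqrt{C-u}=\sqrt{C-b^2+s^2}$ recovers (\ref{solution1}); the converse directions are immediate substitutions. The hard part is exactly this last step: reducing the projective-flatness equation to the $\tau$-coefficient system and checking that the over-determined equations are consistent and integrate to the stated quartic, all while keeping the sign and reality subcases ($P=0$, $\sigma=0$, constant $g$) under control.
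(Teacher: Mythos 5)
Your proposal is correct and follows essentially the same route as the paper: Theorem \ref{main1} reduces the problem to solving (\ref{pde}) together with (\ref{pde2}), the change of variables $u=b^2-s^2$, $v=s$ and the substitution $\phi^{-1/2}$ turn the curvature equation into the Ermakov-type ODE $(1/\sqrt{\phi})_{vv}=\sigma(1/\sqrt{\phi})^{-3}$ (the paper's (\ref{eqn01})), whose $v$-solutions with $u$-dependent integration functions are then constrained by $\phi_{vv}-2v\phi_{uv}-4\phi_u=0$ to yield (\ref{solution1}) and (\ref{solution2}). The only difference is that the paper outsources the solution of (\ref{eqn01}) and the subsequent matching to Lemmas \ref{lemmaa} and \ref{lemmab} quoted from the reference, whereas you carry out those quadratures and the coefficient-matching explicitly; your computations (the first integral $g_v^2=P-\sigma g^{-2}$, the relation $v_0=-Dq$, and the integrated constraint $D^2q^4+(u-C)q^2-\sigma=0$) all check out.
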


The case when $\kappa\neq0$ and $\mu\neq0$ can be reduced to the above case by some special deformations. See Section 3 and Example \ref{ex5} for details.

The case when $\kappa=0$ and $\mu\neq0$ is very special, and we have the following result.

\begin{theorem}\label{main3} Let $F=\a\phi(b^2,\frac{\b}{\a})$ be a general $\ab$-metric on an $n$-dimensional manifold $M$ with $n\geq3$, where $\a$,  $\b$ and $\phi$ satisfy (\ref{conditions}) with $\kappa=0$ and $\mu\neq0$. Then $F$ is of constant flag curvature $K$ if and only if $\phi$ is given by:
\begin{eqnarray}
\phi(u,v)=\frac{2q(u)(\sqrt{u+v^{2}}\pm v)^{2}}{[q(u)(\sqrt{u+v^{2}}\pm v)^{2}+p(u)]^2+\tau},\label{solution3}
\end{eqnarray}
where $\tau: = -K/\mu$, $u:=b^2-s^2$ and $v:=s$, the functions $p(u)$ and $q(u)$ are given by one of the forms:
\begin{eqnarray}\label{pppp}
p(u)=\pm\sqrt{-\tau},\qquad q(u)=\pm\f{(C\pm\sqrt{C^2+8pu})^2}{4u^2}
\end{eqnarray}
or
\begin{eqnarray}
p(u)&=&\pm\sqrt{\f{-(C^2-D)\tau-C(C\tau-2u)\pm\sqrt{D(C\tau-2u)^2-D(C^2-D)\tau^2}}{2(C^2-D)}},\label{pp}\\
q(u)&=&\f{p^2+\tau-upp'\pm\sqrt{(p^2+\tau-upp')^2-(p^2+\tau)u^2p'}}{u^2p'},\label{qq}
\end{eqnarray}
where $C$ and $D$ are constants.
\end{theorem}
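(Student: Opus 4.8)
The plan is to route everything through Theorem \ref{main1} and then integrate the resulting system in two stages. Under the hypotheses (\ref{conditions}) with $\kappa=0$, equation (\ref{pde2}) becomes $-\mu b^2[\psi^2-(\psi_2+2s\psi_1)]+\mu s\psi+\mu=K\phi^2$; dividing by $\mu\neq0$ and setting $\tau=-K/\mu$ gives $b^2[\psi^2-(\psi_2+2s\psi_1)]-s\psi-1=\tau\phi^2$. I would then pass to the coordinates $u=b^2-s^2$, $v=s$ of the statement. Since $b^2=u+v^2$ and, for any $g=g(b^2,s)$, one has $g_2+2sg_1=g_v$, it follows that $\psi=\frac{\phi_v}{2\phi}$ and the curvature equation collapses to
\begin{equation*}
(u+v^2)\left(\psi^2-\psi_v\right)-v\psi-1=\tau\phi^2 ,\qquad \psi=\frac{\phi_v}{2\phi}.
\end{equation*}
The same substitution turns the projective flatness equation (\ref{pde}) into the linear equation $\phi_{vv}=4\phi_u+2v\phi_{uv}$. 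The whole problem is now to find the common solutions of these two equations.

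For the first stage I would integrate the curvature equation with $u$ held fixed, i.e.\ as a second order ODE in $v$. The nonlinear term $\tau\phi^2$ suggests the substitution $\phi=\frac{2m}{(m+p)^2+\tau}$, in which $p=p(u)$ is the first integration constant; a direct computation gives $\psi=\frac{m_v\,(p^2+\tau-m^2)}{2m\,[(m+p)^2+\tau]}$, and feeding this into the curvature ODE should reduce it to the separable first order equation $m_v=\pm\frac{2m}{\sqrt{u+v^2}}$. Using the identity $\partial_v\log(\sqrt{u+v^2}\pm v)=\pm(u+v^2)^{-1/2}$, integration yields $m=q(u)\bigl(\sqrt{u+v^2}\pm v\bigr)^2$ with a second integration constant $q=q(u)$. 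This is precisely the form (\ref{solution3}), with $p(u)$ and $q(u)$ still free and the two signs recording the two branches.

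For the second stage I would impose projective flatness. Writing $W:=\bigl(\sqrt{u+v^2}\pm v\bigr)^2$ and recording $W_v=\pm\frac{2W}{\sqrt{u+v^2}}$ and $W_u=\frac{\sqrt W}{\sqrt{u+v^2}}=\frac{2W}{W+u}$, I would substitute (\ref{solution3}) into $\phi_{vv}=4\phi_u+2v\phi_{uv}$. Because all of the $v$-dependence enters only through $W$, the resulting identity organizes into a polynomial relation in $W$ whose coefficients are expressions in $u,p,q,p',q'$; demanding that it hold for all $v$ forces each coefficient to vanish, producing a coupled system of ODEs for $p(u)$ and $q(u)$.

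Solving this ODE system is the main obstacle, and it is where the two families separate, the natural dichotomy being whether $p$ is constant. If $p'\equiv0$ the consistency of the system forces $p^2+\tau=0$, i.e.\ $p=\pm\sqrt{-\tau}$, after which the surviving equation for $q$ integrates to the algebraic expression (\ref{pppp}); if $p'\not\equiv0$ one may divide by $p'$, solve the resulting quadratic for $q$ in terms of $p$ and $p'$ to obtain (\ref{qq}), and then reduce to a single ODE for $p$ whose first integral is the quartic giving (\ref{pp}). For the converse direction I would substitute each candidate $\phi$ back into the two reduced equations and check that both hold identically, so that Theorem \ref{main1} delivers constant flag curvature $K$. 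The delicate parts are verifying that the substitution $\phi=\frac{2m}{(m+p)^2+\tau}$ really linearizes the curvature ODE and the bookkeeping of the $W$-coefficients together with the case analysis for $p$; the remaining manipulations are controlled computations.
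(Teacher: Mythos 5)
Your proposal is correct and follows essentially the same route as the paper: Theorem \ref{main1} reduces the problem to the PDEs (\ref{pde}) and (\ref{pde2}); the curvature equation in the variables $(u,v)$ is integrated as an ODE in $v$ (the paper writes it as $(u+v^{2})f_{vv}+vf_{v}-f-\tau f^{-3}=0$ for $f=1/\sqrt{\phi}$, multiplies by $f_{v}$ and separates variables, which is exactly your substitution $\phi=\frac{2m}{(m+p)^{2}+\tau}$ with $p(u)$ the first integral, leading to $m=q(u)(\sqrt{u+v^{2}}\pm v)^{2}$); and then substituting into $\phi_{vv}-2v\phi_{uv}-4\phi_{u}=0$ and separating the rational and irrational parts in $v$ gives the same ODE system for $p$ and $q$, solved by the same $p'=0$ versus $p'\neq0$ dichotomy. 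The differences are only notational, so no further comparison is needed.
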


By Theorem \ref{main3}, we can obtain some new Finsler metrics with constant flag curvature $1$, $0$ and $-1$. For example, it is easy to check that $\phi(b^2,s)=(b+s)^2$ satisfies Equations (\ref{pde}) and (\ref{pde2}) with $\kappa=0$, $\mu\neq0$ and $K=0$, so $$F=\f{(b\a+\b)^2}{\a}$$
is projectively flat and of vanishing flag curvature,
where
\begin{eqnarray*}
\a=\f{\sqrt{(1+\mu|x|^2)|y|^2-\mu\langle x,y\rangle^2}}{1+\mu|x|^2},\qquad
\b=\f{\lambda\langle x,y\rangle+(1+\mu|x|^2)\langle a,y\rangle-\mu\langle a,x\rangle\langle x,y\rangle}{(1+\mu|x|^2)^\frac{3}{2}}.
\end{eqnarray*}
with additionally $\lambda^2+\mu|a|^2=0$, which makes $\a$ and $\b$ satisfy (\ref{conditions}) with $\kappa=0$.  One can find more examples in Section 9.

Notice that $\p-s\pt=b^2-s^2$ and $\p-s\pt+(b^2-s^2)\ptt=3(b^2-s^2)$, so such metrics are non-regular at the directions $(y^i)=\pm(b^i)$. Moreover, $\phi=0$ when $(y^i)=-(b^i)$. Actually, all the metrics determined by Theorem \ref{main3} have the same singularity. Hence, we have
\begin{corollary}
When $n\geq3$, all the non-trivial regular general $\ab$-metrics $F=\a\phi(b^2,\frac{\b}{\a})$ with constant flag curvature satisfying (\ref{conditions}) are completely determined by Theorem \ref{main2}.
\end{corollary}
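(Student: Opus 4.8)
The plan is to reduce the Corollary to a regularity analysis of the metrics produced by Theorem \ref{main3}. Recall that $F=\a\phi(b^2,s)$ is a (strongly convex) regular Finsler metric precisely when
$$\phi>0,\qquad \phi-s\pt>0,\qquad \phi-s\pt+(b^2-s^2)\ptt>0$$
hold for all $|s|\le b$; the extreme directions $s=\pm b$, i.e. $u:=b^2-s^2=0$, are exactly the directions $(y^i)=\pm(b^i)$. Under the Assumption (\ref{conditions}) there are only four possibilities according to the values of $\kappa$ and $\mu$. The case $\kappa=\mu=0$ is the trivial (locally Minkowskian) one and is excluded by hypothesis. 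When $\kappa\neq0$ the metric is governed by Theorem \ref{main2} directly if $\mu=0$, and if $\mu\neq0$ it is carried to a $\mu=0$ metric by the deformation of Section 3, which is a genuine Finsler correspondence and hence preserves regularity. Thus the whole Corollary comes down to showing that \emph{every} metric of Theorem \ref{main3} (the case $\kappa=0$, $\mu\neq0$) fails to be regular.

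For this I would exploit the fact that, after the substitution $u=b^2-s^2$, $v=s$, one has $\sqrt{u+v^2}=b$, so that the building block of (\ref{solution3}) is $(\sqrt{u+v^2}\pm v)^2=(b\pm s)^2$; along the single direction $(y^i)=\mp(b^i)$ one has simultaneously $\xi:=b\pm s\to0$ and $u\to0$, and it is there that the metric should degenerate. The prototype $\phi=(b+s)^2$ is the model: there $\phi-s\pt=u$ and $\phi-s\pt+(b^2-s^2)\ptt=3u$, both vanishing to first order at $u=0$, so $\det(g_{ij})$ degenerates at $(y^i)=-(b^i)$ (for $n\ge3$ the factor $(\phi-s\pt)^{n-2}$ in the determinant already kills it). The strategy is to establish the same degeneration for the whole family: expand $\phi$ and $\phi-s\pt$ as $u\to0$ and show that in every branch and every subcase of (\ref{pppp}) and (\ref{pp})--(\ref{qq}), along $(y^i)=\mp(b^i)$ either $\phi\to0$ or the convexity quantity $\phi-s\pt\to0$.

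The main obstacle is precisely this limiting analysis, which is not uniform. In the generic branches the denominator $[q(u)\xi^2+p(u)]^2+\tau$ stays nonzero at $u=0$, and since the numerator carries $\xi^2=O(u^2)$ one gets $\phi\to0$ at once. In the distinguished subcase (\ref{pppp}), however, $p\equiv\pm\sqrt{-\tau}$ is constant, so $p(0)^2+\tau=0$, the denominator also vanishes, and $\phi$ becomes an indeterminate $0/0$ form; moreover $q(u)$ can itself blow up like $u^{-2}$ (as it does for the prototype, where $q=2u^{-2}$ in the representation (\ref{solution3})). Handling these cases forces one to expand to the correct order in $\xi$ (equivalently in $u$), tracking the competing vanishing of numerator and denominator against the possible pole of $q$, and then to read off the leading term of $\phi-s\pt$ and check that it is $O(u)$ rather than a positive constant. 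Once this branch-by-branch expansion confirms that the convexity quantity vanishes at $u=0$ in each subcase, regularity fails along $(y^i)=\mp(b^i)$ for all metrics of Theorem \ref{main3}; hence the only non-trivial regular general $\ab$-metrics satisfying (\ref{conditions}) are those of Theorem \ref{main2}, as claimed.
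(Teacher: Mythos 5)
Your reduction is exactly the paper's: the trivial case $\kappa=\mu=0$ is set aside, the case $\kappa\neq0$, $\mu\neq0$ is folded into Theorem \ref{main2} via the deformation of Lemma \ref{lemma}, and the corollary rests on the claim that every metric of Theorem \ref{main3} degenerates at the directions $(y^i)=\pm(b^i)$ --- a claim the paper itself merely asserts (``all the metrics determined by Theorem \ref{main3} have the same singularity'') after checking the prototype $\phi=(b+s)^2$, for which $\phi-s\phi_2=b^2-s^2$ and $\phi-s\phi_2+(b^2-s^2)\phi_{22}=3(b^2-s^2)$ vanish at $s=\pm b$. Your branch-by-branch limiting analysis as $u\to0$, including the delicate subcase $p^2+\tau=0$ of (\ref{pppp}) where numerator, denominator and the pole of $q$ compete, is therefore at least as complete as the paper's own argument; note only that the analysis simplifies considerably because every solution of (\ref{pde}) satisfies $\phi-s\phi_2=f(b^2-s^2)$ and hence $\phi_{22}=2f'(b^2-s^2)$, so both convexity quantities at the critical directions collapse to the single one-variable limit $f(0)$, turning your two-variable expansion into a one-variable check for each family.
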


\section{Preliminaries}
Let $F$ be a Finsler metric on an $n$-dimensional manifold $M$ and $G^{i}$ be the geodesic coefficients of $F$, which are defined by
\begin{eqnarray*}
G^{i}=\frac{1}{4}g^{il}\left\{[F^{2}]_{x^{k}y^{l}}y^{k}-[F^{2}]_{x^{l}}\right\},
\end{eqnarray*}
where $(g^{ij}):=\left(\frac{1}{2}[F^{2}]_{y^{i}y^{j}}\right)^{-1}$. For a Riemannian metric, the spray coefficients are determined by its Christoffel symbols as $G^{i}(x,y)=\frac{1}{2}\Gamma^{i}_{jk}(x)y^{j}y^{k}$.

For any $x\in M$ and $y\in T_{x}M\backslash\{0\}$, the Riemann
curvature tensor $R_{y}=R^{i}{}_{j}\frac{\partial}{\partial x^{i}}\otimes
dx^{j}$ of $F$ is defined by
$$ R^{i}{}_{j}=2\frac{\partial G^{i}}{\partial x^{j}}-\frac{\partial^{2}G^{i}}{\partial x^{k}\partial y^{j}}y^{k}+2G^{k}\frac{\partial^{2}G^{i}}{\partial y^{k}
\partial y^{j}}-\frac{\partial G^{i}}{\partial y^{k}}\frac{\partial G^{k}}{\partial y^{j}}.$$

The value as follows
\begin{eqnarray*}
   K(P,y):=\frac{g_{y}(R_{y}(u),u)}{g_{y}(y,y)g_{y}(u,u)-[g_{y}(y,u)]^{2}}
\end{eqnarray*}
is called the flag curvature of the flag plane $P=\textrm{span}\{y,u\}\subset T_{x}M$ along the direction $y$.
When $F$ is Riemannian, $K(P,y)=K(P)$ is independent of $y\in P$ and it
is just the sectional curvature of $P$ in Riemann geometry. $F$
is said to be of constant flag curvature if for any $y\in T_x M$ , the
flag curvature $K(P,y)=K$ is a constant, that is equivalent to the following system of equations in
a local coordinate system $(x^{i},y^{i})$ in $TM$ ,
\begin{eqnarray*}
  R^{i}{}_{j} =K F^{2}(\delta^{i}{}_{j}-F^{-1}F_{y^{j}}y^{i}).
\end{eqnarray*}

On the other hand, a Finsler metric $F$ on a manifold $M$ is said to be locally projectively flat if at any point, there is a local coordinate system $(x^{i})$ in which the geodesics are straight lines as point sets. In this case, the spray coefficients are in the form $G^{i}=Py^{i}$, where $P=P(x,y)$ given by $P=\frac{F_{x^{k}}y^{k}}{2F}$ is called the projective factor of $F$. For a projectively flat Finsler metric $F$, the flag curvature is given by
\begin{eqnarray}\label{K}
K=\frac{P^2-P_{x^k}y^k}{F^2}.
\end{eqnarray}

By definition, a general $(\a,\b)$-metric is given by (\ref{generalab}) where $\phi=\phi(b^2,s)$ is a smooth function defined on the domain $|s|\leq b<b_o$ for some positive number (maybe infinity) $b_o$, $\alpha$ is a Riemannian metric  and  $\beta $ is a $1$-form  with $b<b_o$. When $n\geq3$, $F= \a\phi(b^2,\frac{\b}{\a})$ is a regular Finsler metric for any $\alpha$ and $\beta$ with $ b<b_o$
if and only if $\p(b^2,s)$ satisfies
\begin{eqnarray*}
\p-s\pt>0,\quad\p-s\pt+(b^2-s^2)\ptt>0,\qquad|s| \leq b < b_o.
\end{eqnarray*}

Let $\alpha=\sqrt{a_{ij}(x)y^iy^j}$  and $\beta= b_i(x)y^i$.
Denote the coefficients of the covariant derivative of
$\b$ with respect to $\a$ by $b_{i|j}$, and let
\begin{eqnarray*}
&r_{ij}=\frac{1}{2}(b_{i|j}+b_{j|i}),~s_{ij}=\frac{1}{2}(b_{i|j}-b_{j|i}),
~r_{00}=r_{ij}y^iy^j,~s^i{}_0=a^{ij}s_{jk}y^k,&\\
&r_i=b^jr_{ji},~s_i=b^js_{ji},~r_0=r_iy^i,~s_0=s_iy^i,~r^i=a^{ij}r_j,~s^i=a^{ij}s_j,~r=b^ir_i,&
\end{eqnarray*}
where $(a^{ij}):=(a_{ij})^{-1}$ and $b^{i}:=a^{ij}b_{j}$. It is easy to see that $\b$ is closed if and only if $s_{ij}=0$.

\begin{lemma}\cite{yct-zhm-onan}
The geodesic coefficients $G^{i}$ of a general $(\alpha,\beta)$-metric $F=\a\phi(b^2,\frac{\b}{\a})$ are given by
\begin{eqnarray}\label{Gi}
G^i&=&{}^\a G^i+\a Q s^i{}_0+\left\{\Theta(-2\a Q s_0+r_{00}+2\a^2
R r)+\a\Omega(r_0+s_0)\right\}\frac{y^i}{\a}\nonumber\\
&&+\left\{\Psi(-2\a Q s_0+r_{00}+2\a^2 R
r)+\a\Pi(r_0+s_0)\right\}b^i -\a^2 R(r^i+s^i),
\end{eqnarray}
where ${}^\a G^i$ are the geodesic coefficients of $\alpha$, and
\begin{eqnarray*}
&Q=\frac{\pt}{\p-s\pt},\quad R=\frac{\po}{\p-s\pt},\quad \Omega=\frac{2\po}{\p}-\frac{s\p+(b^2-s^2)\pt}{\p}\Pi,&\\
&\Theta=\frac{(\p-s\pt)\pt-s\p\ptt}{2\p\big(\p-s\pt+(b^2-s^2)\ptt\big)},
\quad\Psi=\frac{\ptt}{2\big(\p-s\pt+(b^2-s^2)\ptt\big)},\quad
\Pi=\frac{(\p-s\pt)\pot-s\po\ptt}{(\p-s\pt)\big(\p-s\pt+(b^2-s^2)\ptt\big)}.&
\end{eqnarray*}
\end{lemma}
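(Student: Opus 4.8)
The plan is to compute $G^i$ directly from its definition $G^i=\tfrac14 g^{il}\{[F^2]_{x^ky^l}y^k-[F^2]_{x^l}\}$ applied to $F^2=\a^2\p^2$ with $s=\b/\a$. First I would record the elementary rules $\a_{y^i}=y_i/\a$ and $s_{y^i}=\tfrac1\a(\bi-s\,y_i/\a)$, where $y_i:=\aij y^j$, and differentiate $F^2$ twice in $y$. Because $b^2$ does not depend on $y$, only $s$-derivatives of $\p$ enter at this stage, and the fundamental tensor acquires the rank-structured form $\gij=\rho\,\aij+\rho_0\,\bi\bj+\rho_1(\bi y_j+\bj y_i)+\rho_2\,y_iy_j$, with $\rho,\rho_0,\rho_1,\rho_2$ explicit functions of $\p,\pt,\ptt,b^2,s$.

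Next, since the correction to $\rho\,\aij$ has rank at most two, I would invert $\gij$ by the matrix inversion lemma, obtaining $g^{il}=\rho^{-1}\{a^{il}+\eta\,b^ib^l+\eta_1(b^iy^l+b^ly^i)+\eta_2\,y^iy^l\}$ for suitable scalars $\eta,\eta_1,\eta_2$ built from the $\rho$'s and the invariants $b^2,s,\a^2$. The regularity conditions $\p-s\pt>0$ and $\p-s\pt+(b^2-s^2)\ptt>0$ recorded above are exactly the positivity guaranteeing invertibility, and they reappear as the denominators of $Q,R,\Theta,\Psi,\Pi$.

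The genuinely new step, compared with ordinary $\ab$-metrics, is the computation of the $x$-derivatives of $F^2=\a^2\p(b^2,s)^2$. Converting ordinary derivatives into covariant ones, the derivatives of $\a^2$ reassemble into the Riemannian spray $\G$, while the derivatives of $\b$ split $\bij$ into its symmetric and skew parts $\rij$ and $\sij$ and, after contraction with $y$, produce $r_{00}$, $s^i{}_0$ and $s_0$. Crucially, the chain rule acting on $\p$ through $b^2=a^{ij}\bi\bj$ brings in the factor $\po$ together with the covariant derivative of $b^2$, which contracts into the new scalars $\ri=b^j r_{ji}$, $\si=b^j s_{ji}$ and $r=b^i\ri$, hence into $r_0,s_0,r^i,s^i$. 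These are precisely the quantities absent from the classical theory.

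Finally I would substitute $g^{il}$ and the assembled bracket into $G^i=\tfrac14 g^{il}\{\cdots\}$ and collect terms. The Riemannian contribution is exactly $\G$; the $s^i{}_0$ term yields $\a Q s^i{}_0$ with $Q=\pt/(\p-s\pt)$; and after factoring the common denominators the remaining pieces organize into the $\f{y^i}{\a}$ and $b^i$ coefficients involving $R=\po/(\p-s\pt)$ and the listed $\Theta,\Psi,\Omega,\Pi$, together with the term $-\a^2R(r^i+s^i)$. I expect the main obstacle to be purely computational: simultaneously bookkeeping the rank-two inverse against the numerous tensorial contractions, and verifying that all $\po$-dependent contributions collapse into the compact coefficients $R,\Omega,\Pi$ rather than a larger unstructured expression. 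A reliable final check is to set $\po=0$ (the classical $\ab$ case): then $R=\Omega=\Pi=0$, the $r,r^i,s^i$ terms and the $\po$-driven parts of $r_0,s_0$ drop out, and the formula must reduce to the classical spray $G^i=\G+\a Q s^i{}_0+\Theta(r_{00}-2\a Q s_0)\f{y^i}{\a}+\Psi(r_{00}-2\a Q s_0)b^i$ for ordinary $\ab$-metrics.
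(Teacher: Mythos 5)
The paper itself offers no proof of this lemma; it is quoted with the citation \cite{yct-zhm-onan}, where it is established by exactly the direct computation you outline: differentiate $F^2=\a^2\p^2$ in $y$ to get the fundamental tensor as $\a^2$'s metric plus a low-rank correction, invert by the Sherman--Morrison type formula (with the two regularity inequalities guaranteeing the denominators), covariantize the $x$-derivatives so that the $\po$-terms generated by $b^2=a^{ij}\bi\bj$ contract into $r_i$, $s_i$, $r$, and collect. Your plan is the same approach, and your final consistency check (setting $\po=0$ forces $R=\Omega=\Pi=0$ and recovers the classical $(\a,\b)$-spray) is the right way to validate the bookkeeping.
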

Note that $\phi_1$ means the derivation of $\phi$ with respect to the first variable $b^2$.

Finally, it is known that if the geodesic spray coefficients of a Finsler metric $F$ are given by
$$G^i={}^\a G^i+Q^i,$$
then the Riemann curvature tensor of $F$ are related to that of $\a$ and given by
\begin{eqnarray}\label{riccichange}
R^i{}_j={}^\a R^i{}_j+2Q^i{}_{|j}-y^mQ^i{}_{|m.j}+2Q^mQ^i{}_{.m.j}-Q^i{}_{.m}Q^m{}_{.j},
\end{eqnarray}
where ``${}_|$" and  ``$.$" denote the horizontal covariant derivative and vertical covariant derivative with respect to $\a$ respectively, i.e., $*_{.i}=\pp{*}{y^i}$.

\section{Constant sectional curvature Riemannian metrics and their conformal $1$-forms}
According to \cite{yct-dhfp}, if $\a$ and $\b$ satisfy (\ref{condition}), then there is a local coordinate system in which
\begin{eqnarray}\label{ab}
\a=\f{\sqrt{(1+\mu|x|^2)|y|^2-\mu\langle x,y\rangle^2}}{1+\mu|x|^2},\qquad
\b=\f{\lambda\langle x,y\rangle+(1+\mu|x|^2)\langle a,y\rangle-\mu\langle a,x\rangle\langle x,y\rangle}{(1+\mu|x|^2)^\frac{3}{2}}.
\end{eqnarray}
In this case,
\begin{eqnarray}\label{bija}
\bij=\f{\lambda-\mu\langle a,x\rangle}{\sqrt{1+\mu|x|^2}}\aij.
\end{eqnarray}
One can check directly
\begin{eqnarray}\label{km}
c^2(x)=\lambda^2+\mu|a|^2-\mu b^2.
\end{eqnarray}
Hence, we immediately have
\begin{lemma}\label{c(x)}
If $\a$ and $\b$ satisfy (\ref{condition}), then
$$c^2=\kappa-\mu b^2$$
for some constant $\kappa$.
\end{lemma}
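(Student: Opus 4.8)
The cleanest route is intrinsic and bypasses the explicit normal forms (\ref{ab}). The plan is to extract a first-order relation for the conformal factor $c(x)$ directly from the two conditions in (\ref{condition}), and then to recognize $c^2+\mu b^2$ as a covariantly constant — hence constant — scalar.

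First I would differentiate the conformal condition $\bij=c\,\aij$ once more covariantly with respect to $\a$. Since $\a$ is metric compatible ($a_{ij|k}=0$), this gives $b_{i|j|k}=c_{|k}\,\aij$. Forming the skew combination in $j,k$ and applying the Ricci identity for the $1$-form $\b$ then produces a curvature term; with the paper's curvature sign convention,
\[
c_{|k}\,a_{ij}-c_{|j}\,a_{ik}=b_{i|j|k}-b_{i|k|j}=b_m\,{}^\a R^m{}_{ijk}.
\]
The first condition ${}^\a R^i{}_j=\mu(\a^2\delta^i{}_j-y^iy_j)$ says precisely that $\a$ has constant sectional curvature $\mu$, so its full curvature tensor is ${}^\a R^m{}_{ijk}=\mu(\delta^m{}_j a_{ik}-\delta^m{}_k a_{ij})$ and $b_m\,{}^\a R^m{}_{ijk}=\mu(b_j a_{ik}-b_k a_{ij})$. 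Contracting the displayed identity with $a^{ij}$ and using $a^{ij}a_{ij}=n$ collapses it to the first-order equation
\[
(n-1)\,c_{|k}=-\mu(n-1)\,b_k,\qquad\text{i.e.}\qquad c_{|k}=-\mu\,b_k .
\]

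With this relation the conclusion is immediate. Differentiating $b^2=a^{ij}b_ib_j$ and using $\bij=c\,\aij$ gives $(b^2)_{|k}=2b^jb_{j|k}=2c\,b_k$, whence
\[
(c^2+\mu b^2)_{|k}=2c\,c_{|k}+\mu(b^2)_{|k}=-2\mu c\,b_k+2\mu c\,b_k=0 .
\]
Thus $c^2+\mu b^2$ is a constant $\kappa$, i.e. $c^2=\kappa-\mu b^2$. Alternatively one may skip the curvature computation altogether and simply substitute the explicit normal forms (\ref{ab})--(\ref{bija}) into $b^2=\|\b\|_\a^2$ to verify (\ref{km}) by hand, reading off $\kappa=\lambda^2+\mu|a|^2$; this is the computation indicated by (\ref{km}).

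The only delicate point is the sign bookkeeping in the middle step: one must correctly pass from the Finsler-style condition ${}^\a R^i{}_j=\mu(\a^2\delta^i{}_j-y^iy_j)$ to the full tensor ${}^\a R^m{}_{ijk}$ and fix the sign in the Ricci identity so that the contraction yields $c_{|k}=-\mu b_k$ rather than $+\mu b_k$ (the wrong sign would give the spurious relation $c^2-\mu b^2=\text{const}$). The sign is pinned down by internal consistency, and can be cross-checked against the direct computation from (\ref{ab}). All remaining manipulations are routine contractions.
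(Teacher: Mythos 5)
Your argument is correct, but it is genuinely different from the one in the paper. The paper's proof is purely computational: it invokes the local normal forms (\ref{ab}) for $\a$ and $\b$ established in the reference for deformations and Hilbert's fourth problem, reads off $c(x)=\frac{\lambda-\mu\langle a,x\rangle}{\sqrt{1+\mu|x|^2}}$ from (\ref{bija}), and verifies the identity (\ref{km}) $c^2=\lambda^2+\mu|a|^2-\mu b^2$ by direct substitution, so that $\kappa=\lambda^2+\mu|a|^2$. Your main route instead stays intrinsic: differentiating $b_{i|j}=c\,a_{ij}$, applying the Ricci identity together with the constant-curvature form of the full Riemann tensor, and contracting to get $c_{|k}=-\mu b_k$, after which $(c^2+\mu b^2)_{|k}=0$ follows from $(b^2)_{|k}=2c\,b_k$. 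I checked the sign: $c_{|k}=-\mu b_k$ is the one consistent with (\ref{km}), so your bookkeeping is right, and the contraction step only needs $n\ge 2$, which is harmless under the paper's standing hypothesis $n\ge 3$. What your approach buys is independence from the classification result (\ref{ab}) — the lemma becomes a self-contained consequence of (\ref{condition}) alone, valid before one knows the normal forms; what the paper's approach buys is brevity, since the explicit expressions (\ref{ab}) are needed later in Sections 3 and 9 anyway, and your closing remark correctly identifies that shortcut as exactly the computation behind (\ref{km}).
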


The constant $\kappa$ has specific geometric meaning. In order to see it, we need some discussions on wrap product.

Because $\b$ is closed, we can assume locally $\b=\ud f\neq0$ for some smooth function $f(x)$. It is easy to see that the condition $\bij=c\aij$ is equivalent to $\mathrm{Hess}_{\a}f=c\a^2$. According to P. Petersen's result, in this case
\begin{eqnarray}\label{wa}
\a^2=\ud t\otimes\ud t+h^2(t)\breve{\alpha}^2
\end{eqnarray}
must be locally a warped product metric on the manifold $M=\RR\times{\breve M}$, where $\breve M$ is an $(n-1)$-dimensional manifold equipped with the Riemannian metric $\breve\alpha$. Moreover, the function $f$ depends only on the parameter $t$ of $\RR$ and $h(t)=f'(t)$\cite{pete}.

Let $x^1=t$ and $\{x^a\}_{a=2}^n$ be a local coordinate system on $\breve M$, then the Riemann curvature tensor of $\a$ is determined by\cite{bao-cheng-shen}
\begin{eqnarray*}
R^1{}_j=-\f{h''}{h}(\a^2\delta^1{}_j-y^1y_j),\qquad
R^a{}_c={}^{\breve{\alpha}}R^a{}_c-(h')^2(\breve\a^2\delta^a{}_c-\breve y^a\breve y_c)-\f{h''}{h}(y^1)^2\delta^a{}_c,
\end{eqnarray*}
where $\breve y^a=y^a$ and $\breve y_c=\breve a_{ac}\breve y^a$. Hence, if $\a$ is of constant sectional curvature, then combining with the first equality of (\ref{condition}) and the above two equalities we obtain $h''+\mu h=0$
and
\begin{eqnarray}
{}^{\breve{\alpha}}R^a{}_c=\left[\mu h^2+(h')^2\right](\breve\alpha^2\delta^a{}_c-\breve y^a\breve y_c).\label{ricbreve}
\end{eqnarray}
On the other hand, $\b=\ud f=h(t)\,\ud t$ by assumption and hence $b^2=h^2$. Direct computations show that $\bij=h'(t)\aij$,
so by Lemma \ref{c(x)} we have
$$\mu h^2+(h')^2=\mu b^2+c^2=\kappa.$$

\begin{lemma}
The Riemannian metric $\breve\a$ in (\ref{wa}) is of constant sectional curvature $\kappa$.
\end{lemma}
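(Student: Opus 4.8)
The plan is to read the conclusion almost directly off the warped-product curvature identity (\ref{ricbreve}) that has just been established. Equation (\ref{ricbreve}) already puts the Riemann curvature tensor of $\breve\alpha$ into the canonical constant-curvature shape, with the scalar coefficient $\mu h^2+(h')^2$ multiplying the tensor $\breve\alpha^2\delta^a{}_c-\breve y^a\breve y_c$. Recall that this is exactly the form appearing in the first equation of (\ref{condition}): a Riemannian metric is of constant sectional curvature $K$ precisely when its Riemann curvature tensor equals $K$ times $(\alpha^2\delta^i{}_j-y^iy_j)$. Thus the entire task reduces to verifying that the coefficient $\mu h^2+(h')^2$ is a genuine constant and to identifying its value as $\kappa$.

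First I would argue that $\mu h^2+(h')^2$ is independent of $t$. This is already forced by the structure of (\ref{ricbreve}): the left-hand side ${}^{\breve\alpha}R^a{}_c$ is built from $\breve\alpha$ alone and so depends only on the coordinates $\{x^a\}$ on $\breve M$, while the tensor $\breve\alpha^2\delta^a{}_c-\breve y^a\breve y_c$ is likewise $t$-free; hence the scalar multiplying it cannot depend on $t$. One may also confirm this directly by differentiating along $\RR$ and invoking the relation $h''+\mu h=0$ derived above, which gives $\frac{\ud}{\ud t}\big(\mu h^2+(h')^2\big)=2h'(\mu h+h'')=0$. To pin down the value, I would use the two elementary facts recorded just before the lemma: $\b=h(t)\,\ud t$ gives $b^2=h^2$, while the computation $\bij=h'(t)\aij$ identifies the conformal factor as $c=h'$. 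Feeding these into Lemma \ref{c(x)}, namely $c^2=\kappa-\mu b^2$, yields $(h')^2=\kappa-\mu h^2$, that is, $\mu h^2+(h')^2=\kappa$. Substituting this constant back into (\ref{ricbreve}) produces ${}^{\breve\alpha}R^a{}_c=\kappa(\breve\alpha^2\delta^a{}_c-\breve y^a\breve y_c)$, which is precisely the assertion that $\breve\alpha$ has constant sectional curvature $\kappa$.

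Since the warped-product machinery and the curvature identity (\ref{ricbreve}) are already in hand, there is no serious obstacle here; the argument is a short bookkeeping step. The only point requiring a moment's care is the constancy of $\mu h^2+(h')^2$: one should check that the value obtained pointwise from Lemma \ref{c(x)} is consistent with, and in fact forced by, the $t$-independence coming from (\ref{ricbreve}). Since both routes agree, the coefficient is the single constant $\kappa$ throughout, completing the proof.
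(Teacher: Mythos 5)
Your argument is correct and is essentially the paper's own: the authors likewise read the conclusion off the identity (\ref{ricbreve}) and identify the coefficient via $b^2=h^2$, $c=h'$ and Lemma \ref{c(x)}, giving $\mu h^2+(h')^2=\mu b^2+c^2=\kappa$. Your extra consistency check of $t$-independence using $h''+\mu h=0$ is a harmless addition but not needed.
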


Next, we will show that the case when $\kappa\neq0$ and $\mu\neq0$ could be reduce to the case $\mu=0$. We need some special metrical deformations for $\a$ and $\b$, one can see \cite{yct-dhfp} for details about these deformations.

\begin{lemma}\label{lemma}
When $\kappa\neq0$, $\mu\neq0$ and $\kappa-\mu b^2 >0$, define $\ba$ and $\bb$ by
\begin{eqnarray*}
\ba^2=\f{|\mu|}{\kappa-\mu b^2}\left(\a^2+\f{\mu}{\kappa-\mu b^2}\b^2\right),
\quad\bb=\f{|\mu|^{3/2}}{(\kappa-\mu b^2)^\frac{3}{2}}\b, \label{deformationab}
\end{eqnarray*}
then
$${}^{\bar\a}R^i{}_j=0,\qquad \bbij=\pm\sqrt{|\mu|}\baij.$$
In this case,
$$(\kappa-\mu b^2)(\kappa^{-1}+\mu^{-1}\bar b^2)=1,$$
and the reversed deformations are given by
\begin{eqnarray*}
\a^2=\f{|\mu|^{-1}}{\kappa^{-1}+\mu^{-1}\bar{b}^2}\left(\ba^2-\frac{\mu^{-1}}{\kappa^{-1}+\mu^{-1}b^2}\bb^2\right),
\quad\b=\f{|\mu|^{-3/2}}{(\kappa^{-1}+\mu^{-1}\bar b^2)^\frac{3}{2}}\bb.
\end{eqnarray*}
\end{lemma}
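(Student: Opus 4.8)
The plan is to carry out the verification in the warped-product coordinates introduced in (\ref{wa}). Recall that there $\a^2=\ud t\otimes\ud t+h^2(t)\breve{\alpha}^2$ with $\breve{\alpha}$ of constant sectional curvature $\kappa$, that $\b=h(t)\,\ud t$ so that $b^2=h^2$ and $\bij=h'(t)\aij$, and that $h$ obeys $h''+\mu h=0$ together with $(h')^2+\mu h^2=\kappa$; in particular Lemma \ref{c(x)} gives $\kappa-\mu b^2=(h')^2$. I would substitute these relations into the definitions of $\ba$ and $\bb$ and then recognise the outcome as a flat warped product carrying a homothetic $1$-form.

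Plugging $\a^2=\ud t\otimes\ud t+h^2\breve{\alpha}^2$, $\b^2=h^2\,\ud t\otimes\ud t$ and $\kappa-\mu b^2=(h')^2$ into the definition of $\ba^2$ and simplifying the $\ud t\otimes\ud t$ coefficient with $(h')^2+\mu h^2=\kappa$ yields
\[
\ba^2=\frac{|\mu|\kappa}{(h')^4}\,\ud t\otimes\ud t+\frac{|\mu|h^2}{(h')^2}\,\breve{\alpha}^2 .
\]
I would then introduce a new radial parameter $s$ by $\ud s=\frac{\sqrt{|\mu|\kappa}}{(h')^2}\,\ud t$ (this is exactly where positivity is consumed: it is what forces the relevant sign of $\kappa$ and makes $\ba$ Riemannian), bringing $\ba^2$ into the standard form $\ud s\otimes\ud s+\bar h^2\breve{\alpha}^2$ with $\bar h=\sqrt{|\mu|}\,h/|h'|$. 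Differentiating and repeatedly using $h''=-\mu h$ gives the clean identity $\ud\bar h/\ud s=\sqrt{\kappa}$, so $\bar h$ is affine in $s$: its second derivative vanishes and $\kappa-(\ud\bar h/\ud s)^2=0$. Feeding these two facts into the warped-product curvature formulas quoted above from \cite{bao-cheng-shen} makes both ${}^{\ba}R^1{}_j$ and ${}^{\ba}R^a{}_c$ vanish, so ${}^{\ba}R^i{}_j=0$.

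For $\bb$ I would write it in the new coordinate as $\bb=\bar g\,\ud s$ with $\bar g=|\mu|h/(h'\sqrt{\kappa})$; then $\bar b^2=\bar g^2$ and, by the same mechanism that produced $\bij=h'\aij$ for $\b$, one has $\bbij=(\ud\bar g/\ud s)\baij$. A computation of identical flavour gives $\ud\bar g/\ud s=\sqrt{|\mu|}$, a constant, so $\bbij=\pm\sqrt{|\mu|}\,\baij$ and $\bb$ is homothetic with respect to $\ba$. The length relation is then pure algebra: either directly from $\bar b^2=\bar g^2$, or coordinate-free by inverting $\baij=A\aij+Bb_ib_j$ (with $A=\frac{|\mu|}{\kappa-\mu b^2}$, $B=\frac{|\mu|\mu}{(\kappa-\mu b^2)^2}$) via the Sherman--Morrison formula, one finds $\bar b^2=\frac{\mu^2 b^2}{\kappa(\kappa-\mu b^2)}$, whence $\kappa^{-1}+\mu^{-1}\bar b^2=(\kappa-\mu b^2)^{-1}$, i.e. $(\kappa-\mu b^2)(\kappa^{-1}+\mu^{-1}\bar b^2)=1$. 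Finally the reversed deformation follows by solving the defining relations backwards, $\b=\frac{(\kappa-\mu b^2)^{3/2}}{|\mu|^{3/2}}\bb$ and $\a^2=\frac{\kappa-\mu b^2}{|\mu|}\ba^2-\frac{\mu}{\kappa-\mu b^2}\b^2$, and substituting $\kappa-\mu b^2=(\kappa^{-1}+\mu^{-1}\bar b^2)^{-1}$ from the length relation to re-express everything through $\ba,\bb,\bar b^2$; this reproduces the stated formulas.

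The main obstacle is not the curvature computation itself but the sign and branch bookkeeping: the absolute values $|\mu|,|h'|$ and the $\pm$ in $\bbij$ must be tracked consistently, and the reparametrization together with the positive-definiteness of $\ba$ is precisely the step where the hypotheses $\mu\neq0$, $\kappa\neq0$, $\kappa-\mu b^2>0$ are used up. A secondary technical point is that the warped-product curvature formulas must be applied with respect to the reparametrized variable $s$ rather than $t$, so the chain-rule factors have to be inserted correctly before concluding flatness. An alternative route that avoids coordinates entirely is to transform ${}^{\ba}R^i{}_j$ and $\bbij$ directly through the deformation $\baij=A\aij+Bb_ib_j$, $\bb=\tau\b$ using the connection-change and curvature-change machinery of \cite{yct-dhfp}; this is conceptually cleaner but produces the same sign subtleties.
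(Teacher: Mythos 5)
Your route is genuinely different from the paper's. The paper proves this lemma by a direct tensorial computation: it writes $\bG=\G+\f{c\mu}{\kappa-\mu b^2}\b y^i$, computes the four terms $\bar Q^i{}_{|j}$, $y^k\bar Q^i{}_{|k.j}$, $\bar Q^i{}_{.k}\bar Q^k{}_{.j}$, $\bar Q^k\bar Q^i{}_{.k.j}$ for $\bar Q^i=\f{c\mu}{\kappa-\mu b^2}\b y^i$, and feeds them into the curvature-change formula (\ref{riccichange}) to get ${}^{\ba}R^i{}_j={}^{\a}R^i{}_j-\mu(\a^2\dij-y^iy_j)=0$, plus a one-line covariant-derivative computation for $\bbij$. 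You instead pass to the warped-product normal form of Section 3 and reparametrize the radial coordinate. For $\kappa>0$ your computation checks out: the coefficient of $\ud t\otimes\ud t$ does become $\f{|\mu|\kappa}{(h')^4}$, one gets $\ud\bar h/\ud s=\pm\sqrt{\kappa}$ and $\ud\bar g/\ud s=\pm\sqrt{|\mu|}$, and the curvature formulas then kill both ${}^{\ba}R^1{}_j$ and ${}^{\ba}R^a{}_c$ (the latter because the $\breve\alpha$-curvature $\kappa$ from Lemma 3.2 exactly cancels $(\ud\bar h/\ud s)^2$). The length relation $\bar b^2=\f{\mu^2b^2}{\kappa(\kappa-\mu b^2)}$ and the reversed formulas are pure algebra and agree with the paper. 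What your approach buys is geometric transparency — the deformation is exposed as ``straighten the warping function''; what the paper's approach buys is that it is coordinate-free and signature-blind.

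The one genuine gap is the case $\kappa<0$, which the lemma is meant to cover (the paper explicitly remarks afterwards that $\ba^2$ is then pseudo-Riemannian of signature $(n-1,1)$ with $\bar b^2<0$). In that case $|\mu|\kappa<0$, so your substitution $\ud s=\f{\sqrt{|\mu|\kappa}}{(h')^2}\,\ud t$ is not real, the normal form $\ud s\otimes\ud s+\bar h^2\breve\alpha^2$ is not available, and the warped-product curvature formulas you quote from \cite{bao-cheng-shen} are stated for the Riemannian signature; a Lorentzian radial direction changes signs in those formulas. You would either have to redo the warped-product computation for $-\ud s\otimes\ud s+\bar h^2\breve\alpha^2$ separately, or fall back on the coordinate-free transformation you mention at the end (which is essentially the paper's proof). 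A second, smaller point: the step $\bbij=(\ud\bar g/\ud s)\baij$ is not automatic ``by the same mechanism'' — the Hessian of a radial function in a warped product is $f''\,\ud s\otimes\ud s+f'\bar h\bar h'\breve\alpha^2$, and it is proportional to $\ba^2$ only because $\bar g$ happens to be a constant multiple of $\bar h$ here; that proportionality should be stated, since it is exactly what makes $\bb$ homothetic rather than merely conformal.
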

\begin{proof}
It is easy to see that
$$\bG=\G+\f{c(x)\mu}{\kappa-\mu b^2}\b y^i.$$
Let $\bar Q^i=\f{c(x)\mu}{\kappa-\mu b^2}\b y^i$, then
\begin{eqnarray*}
\bar Q^i{}_{|j}&=&\mu\left(y^iy_j+\f{\mu}{\kappa-\mu b^2}\b y^ib_j\right),\\
y^k\bar Q^i{}_{|k.j}&=&\mu\left(\a^2\delta^i{}_j+y^iy_j+\f{\mu}{\kappa-\mu b^2}\b^2\delta^i{}_j
+\f{\mu}{\kappa-\mu b^2}\b y^ib_j\right),\\
\bar Q^i{}_{.k}\bar Q^k{}_{.j}&=&\f{\mu^{2}}{\kappa-\mu b^2}(\beta^2\delta^i{}_j+3\b y^ib_j),\\
\bar Q^k\bar Q^i{}_{.k.j}&=&\f{\mu^{2}}{\kappa-\mu b^2}(\beta^2\delta^i{}_j+\b y^ib_j),
\end{eqnarray*}
where $y_i=\aij y^j$. So by (\ref{riccichange}) we have
$${}^{\ba}R^i{}_j={}^\a R^i{}_j-\mu(\a^2\delta^i{}_j-y^iy_j)=0.$$
On the other hand, direct computations show that
\begin{eqnarray*}
\bbij&=&\f{c |\mu|^{3/2}}{\left(\kappa-\mu b^2\right)^\frac{3}{2}}\left(\aij+\f{\mu}{{\kappa}-\mu b^2}
\bi\bj\right)=\pm\sqrt{|\mu|}\baij.
\end{eqnarray*}
\end{proof}

Notice that when $\kappa<0$, $\ba^2$ is a pseudo-Riemannian metric of signature $(n-1,1)$, because it is positive definite on the hyperplane $\beta=0$ and negative when $y^i=b^i$. In particular, the norm of $\bb$ with respect to $\ba$ is negative, i.e., $\bar b^2<0$.

$\kappa=0$~(in this case $\mu$ must be negative by Lemma \ref{c(x)}) is a very special case, because the metrical deformation given below is irreversible.
\begin{lemma}\label{lemmaewe}
When $\mu<0$ and $\kappa=0$, define $\ba$ and $\bb$ by
\begin{eqnarray*}
\ba:=\f{\a}{b},
\quad\bb:=\f{\b}{b^2},
\end{eqnarray*}
then
\begin{equation*}
{}^{\bar\a}R^i{}_j=0,\qquad \bbij=0.
\end{equation*}
In this case, $\bar b=1$.
\end{lemma}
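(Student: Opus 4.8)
The plan is to reduce everything to the warped-product normal form established in Section~3 and to observe that, when $\kappa=0$, the conformal rescaling $\ba=\a/b$ straightens the warped product into a genuine Riemannian product of flat factors. Recall that since $\b$ is closed with $\bij=c\aij$, locally $\a^2=\ud t\otimes\ud t+h^2(t)\bra^2$ as in (\ref{wa}), with $\b=\ud f=h(t)\,\ud t$, $b=h(t)$, $c=h'(t)$, and $\bra$ of constant sectional curvature $\kappa$. In the present case $\kappa=0$ forces $\bra$ to be flat, and the identity $\mu h^2+(h')^2=\kappa=0$ together with $\mu<0$ gives $(h')^2=-\mu h^2$, i.e.\ $h'=\pm\sqrt{-\mu}\,h$, so that $h$ is a nonvanishing exponential.

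First I would compute $\ba$ directly in these coordinates. Since $\ba^2=\a^2/b^2=\a^2/h^2$, we get
$$\ba^2=\f{\ud t\otimes\ud t}{h^2}+\bra^2.$$
Introducing the new parameter $\tau$ with $\ud\tau=\ud t/h$ (well defined because $h\neq0$) turns this into the product metric $\ba^2=\ud\tau\otimes\ud\tau+\bra^2$ on $\RR\times\breve M$. As $\bra$ is flat and independent of $\tau$, this product metric is flat, whence ${}^{\ba}R^i{}_j=0$. Next, in the same coordinates $\bb=\b/b^2=h\,\ud t/h^2=\ud t/h=\ud\tau$; in the product metric the only nonzero Christoffel symbols are those of $\bra$, which carry no $\tau$-component, so the $1$-form $\ud\tau$ is parallel and $\bbij=0$.

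Finally, $\bb=\ud\tau$ has unit $\ba$-norm, giving $\bar b=1$. One can also see this tensorially without choosing coordinates: from $\baij=\aij/b^2$ one has $\bar a^{ij}=b^2a^{ij}$ and $\bar b_i=\bi/b^2$, so
$$\bar b^2=\bar a^{ij}\bar b_i\bar b_j=b^{-2}a^{ij}\bi\bj=b^{-2}\,b^2=1.$$

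Alternatively, and as a cross-check, one can mirror the proof of Lemma~\ref{lemma} via the change-of-spray formula (\ref{riccichange}). The rescaling $\ba=e^{\rho}\a$ with $\rho=-\tfrac12\ln b^2$ and $\rho_{,k}=-c\bi/b^2$ yields $\bG=\G+\bar Q^i$ with $\bar Q^i=-\f{c\b}{b^2}y^i+\f{c\a^2}{2b^2}b^i$, which one then inserts into (\ref{riccichange}) together with ${}^\a R^i{}_j=\mu(\a^2\dij-y^iy_j)$. The main obstacle in this route is purely bookkeeping: one must differentiate $\bar Q^i$ horizontally and vertically twice and collect all four terms, the cancellations hinging on $c_{,k}=-\mu\bi$ and, crucially, on $c^2=-\mu b^2$ (the hypothesis $\kappa=0$), which give $\tfrac{c}{b^2}\cdot c=-\mu$ and $\big(\tfrac{c}{b^2}\big)_{,k}=\tfrac{\mu\bi}{b^2}$. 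These are precisely the identities that make the four terms combine to zero. The warped-product argument above makes these cancellations transparent and is the route I would take; the computational argument is worth keeping only to confirm that the reduction to a flat product is genuinely forced by (\ref{conditions}) with $\kappa=0$.
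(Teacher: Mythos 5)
Your proposal is correct, but it proves the lemma by a genuinely different route from the paper. The paper's own proof is the direct computation you relegate to a ``cross-check'': it writes $\bG=\G-\f{c}{b^2}\b y^i+\f{c}{2b^2}\a^2b^i$, computes the four terms $\bar Q^i{}_{|j}$, $y^k\bar Q^i{}_{|k.j}$, $\bar Q^i{}_{.k}\bar Q^k{}_{.j}$, $\bar Q^k\bar Q^i{}_{.k.j}$ explicitly, and feeds them into (\ref{riccichange}) to get ${}^{\ba}R^i{}_j={}^\a R^i{}_j-\mu(\a^2\dij-y^iy_j)=0$, exactly parallel to the template of Lemma \ref{lemma}; the vanishing of $\bbij$ is likewise asserted by direct computation. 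Your primary argument instead exploits the warped-product normal form (\ref{wa}) already established in Section 3: with $\kappa=0$ the fibre metric $\bra$ is flat by Lemma 3.2, the identity $\mu h^2+(h')^2=\kappa=0$ forces $h$ to be a nonvanishing exponential, and the rescaling $\ba^2=\a^2/h^2=h^{-2}\,\ud t\otimes\ud t+\bra^2$ straightens to the flat product $\ud\tau\otimes\ud\tau+\bra^2$ with $\bb=\ud\tau$ parallel and of unit norm. This is a clean and correct derivation (the prerequisites $\b\neq0$ and constant curvature of $\a$ are supplied by the standing hypotheses, and $b>0$ is needed anyway for the deformation to be defined), and it has the conceptual advantage of making visible \emph{why} the deformation works and why it is irreversible: the warping function is collapsed to a constant, so $(\ba,\bb)$ retains no memory of $h$. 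What the paper's computational proof buys in exchange is independence from the normal form and from Petersen's warped-product theorem, and uniformity with the proof of Lemma \ref{lemma}. Two trivial typos in your sketch of the computational route: $\rho_{,k}=-c\bk/b^2$ and $c_{,k}=-\mu\bk$ (your indices $i$ and $k$ are mismatched), but these do not affect the argument.
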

\begin{proof}
It is easy to see that
$$\bG=\G-\f{c}{b^2}\b y^i+\f{c}{2b^2}\a^2b^i,$$
where $c=c(x)$ is a scalar function with $c^2 = -\mu b^2$.
Let $\bar Q^i=-\f{c}{b^2}\b y^i+\f{c}{2b^2}\a^2b^i.$ Then
\begin{eqnarray*}
\bar Q^i{}_{|j}&=&-\f{\mu}{2}\left(\a^2\delta^{i}{}_{j}-2y^{i}y_{j}-\frac{\a^2}{b^{2}}b^{i}b_{j}+\f{2}{b^2}\b b_{j}y^{i}\right),\\
y^k\bar Q^i{}_{|k.j}&=&\mu \left\{\left(\a^2-\frac{\b^2}{b^{2^{}}}\right)\delta^{i}{}_{j}-\f{\b}{b^2}(b^{i}y_{j}-b_{j}y^{i})\right\},\\
\bar Q^i{}_{.k}\bar Q^k{}_{.j}&=&-\frac{\mu}{b^{2}}\left(\b^2\delta^{i}{}_{j}+3\b b_{j}y^{i}-\b b^{i}y_{j}-\a^2b^{i}b_{j}-b^2 y^{i}y_{j}\right),\\
\bar Q^k\bar Q^i{}_{.k.j}&=&-\frac{\mu}{b^{2}}\left\{\left(\b^2-\frac{\a^2 b^{2}}{2}\right)\delta^{i}{}_{j}+\b(b_{j}y^{i}-y_{j}b^{i})-\frac{\a^2}{2}(b^{i}b_{j}-b_{j}b^{i})\right\},
\end{eqnarray*}
where $y_{i}=a_{ij}y^{j}$.
So by (\ref{riccichange}) we have
$${}^{\ba}R^i{}_j={}^\a R^i{}_j-\mu(\a^2\delta^i{}_j-y^iy_j)=0.$$
On the other hand, direct computations show that $\bbij=0$.
\end{proof}

\section{Proof of Theorem \ref{main1}}
\begin{proof}[Proof of Theorem \ref{main1}]
Because $\a$ is of constant sectional curvature, $\a$ must be locally projectively flat due to Beltrami's theorem. Hence, there is a local coordinate system such that ${}^\a G^i=\theta y^i$. By (\ref{Gi}), the spray coefficients $G^i$ of $F$ is given by $G^i=(\theta+c\a\psi)y^i.$

It is easy to see that
$$\a_{x^k}y^k=2\a\theta,\qquad\b_{x^k}y^k=c\a^2+2\b\theta,\qquad c_{x^k}y^k=-\mu\b,$$
where the third equality is based on Lemma \ref{c(x)}. Then
$$(c\a\psi)_{x^k}y^k=\a^2[-\mu s\psi+c^2(\psi_2+2s\psi_1)]+2c\a\theta\psi.$$
So by (\ref{K}) we have
\begin{eqnarray*}
K&=&\frac{(\theta+c\a\psi)^2-\theta_{x^k}y^k-(c\a\psi)_{x^k}y^k}{F^2}\\
&=&\f{\{\theta^2-\theta_{x^k}y^k\}+\a^2\{\mu s\psi+c^2[\psi^2-(\psi_2+2s\psi_1)]\}}{F^2}\\
&=&\f{\mu+\{\mu s\psi+c^2[\psi^2-(\psi_2+2s\psi_1)]\}}{\p^2}.
\end{eqnarray*}
Here we use the fact that $\a$ is projectively flat and hence $\theta^2-\theta_{x^k}y^k=\mu\a^2$ by (\ref{K}).
\end{proof}

In the rest of this paper, we will determine all the general $\ab$-metrics with constant flag curvature under our assumption. There are four different cases below,
\begin{enumerate}[(a)]
\item $\kappa=0$ and $\mu=0$;
\item $\kappa\neq0$ and $\mu=0$;
\item $\kappa\neq0$ and $\mu\neq0$;
\item $\kappa=0$ and $\mu\neq0$.
\end{enumerate}

As we have pointed out in Section 1, the case (a) is trivial and will not be discussed. The case (b) will be discussed in Section 6 and Section 8.

The case (c) can be reduced to the case (b) and hence it is not necessary to be discussed specially. The reason is below.
If $F=\a\phi(b^2,\frac{\b}{\a})$ is a general $\ab$-metric satisfying Theorem \ref{main1} with $\kappa\neq0$ and $\mu\neq0$, then after deformations in Lemma \ref{lemma}, the new data $(\ba,\bb)$ satisfies the condition (\ref{conditions}) with $\kappa\neq0$ and $\mu=0$. As a result, $F$ can be reexpressed as a new form $F=\ba\bar\phi(\bar b^2,\frac{\bb}{\ba})$. In \cite{szm-yct-oefm}, we have proved that if $\phi$ satisfies Equation (\ref{pde}) and (\ref{pde2}), then $\bar\phi$ also satisfies Equation (\ref{pde}) and (\ref{pde2}) with $\bar\kappa=|\mu|$ and $\bar\mu=0$. That is to say, all the solutions provided by (c) are included naturally by in (b).

On the other hand, (d) is intrinsically different from (b). Although a data $\ab$ with $\kappa\neq0$ and $\mu=0$ can turn to be a new data $(\bar\a,\bar\b)$ with $\bar\mu=0$ after deformations in Lemma \ref{lemmaewe}, (d) can not be reduced to (b) like (c). The key point is that the deformations in Lemma \ref{lemmaewe} is irreversible. The case (d) will be discussed in Section 7 and Section 9.

\section{Solutions of Equations (\ref{pde}) and (\ref{pde2}) in general case}
\begin{lemma}
The solutions of Equation (\ref{pde}) are given by
$$\p(b^2,s)=f(b^2-s^2)+2s\int_0^sf'(b^2-\sigma^2)\,\ud\sigma+g(b^2)s,$$
where $f$ and $g$ are two arbitrary smooth functions.
\end{lemma}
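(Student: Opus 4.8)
The plan is to recognise (\ref{pde}) as a disguised first-order transport equation and then integrate an elementary ordinary differential equation in $s$, treating $b^2$ as a parameter. The starting observation is that the right-hand side of (\ref{pde}) is a total $b^2$-derivative: since $\partial_{b^2}(s\pt-\p)=s\pot-\po$, Equation (\ref{pde}) is equivalent to
\[
\ptt+2\,\partial_{b^2}(s\pt-\p)=0 .
\]

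Introduce $\chi:=s\pt-\p$. A one-line computation gives $\partial_s\chi=s\ptt$, so multiplying the displayed identity by $s$ turns it into the linear transport equation
\[
\partial_s\chi+2s\,\partial_{b^2}\chi=0 .
\]
Its characteristics are $b^2-s^2=\mathrm{const}$, so $\chi$ must be a function of $b^2-s^2$ alone; writing it as $\chi=-f(b^2-s^2)$ for an arbitrary smooth function $f$ (the sign being chosen for later convenience), I am reduced to the first-order linear ODE
\[
s\pt-\p=-f(b^2-s^2),
\]
with $b^2$ playing the role of a parameter.

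To integrate this ODE I would use the integrating factor $1/s$, the key point being the identity
\[
\partial_s\!\left(\frac{\p-f(b^2-s^2)}{s}\right)=2f'(b^2-s^2),
\]
which follows directly from the ODE and removes the apparent singularity at $s=0$. Because the numerator $\p-f(b^2-s^2)$ vanishes at $s=0$ (evaluate $\chi$ there), the quotient is smooth, and integrating from $0$ to $s$ and multiplying back by $s$ yields exactly
\[
\p=f(b^2-s^2)+2s\int_0^sf'(b^2-\sigma^2)\,\ud\sigma+g(b^2)s,
\]
where the arbitrary function $g(b^2)$ is the integration constant (in fact $g(b^2)=\pt(b^2,0)$). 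For the converse one checks by direct differentiation that this formula solves (\ref{pde}) for every choice of $f$ and $g$ --- indeed $\pt=g(b^2)+2\int_0^s f'(b^2-\sigma^2)\,\ud\sigma$ and $\ptt=2f'(b^2-s^2)$ --- so the two families exhaust the solution set.

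I expect the only real subtlety to be the behaviour at $s=0$. The manipulations above divide by $s$ and by $s^2$, so one must verify that the resulting $\p$ is genuinely smooth and that $f$ and $g$ are honestly free. This is handled by the compatibility relation $\p(b^2,0)=f(b^2)$ forced by evaluating $\chi=-f(b^2-s^2)$ at $s=0$, which guarantees that the quotient in the key identity extends smoothly across $s=0$; everything else is routine differentiation.
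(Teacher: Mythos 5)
Your proposal is correct and is essentially the paper's own argument: both proofs hinge on showing that $\p-s\pt$ (your $-\chi$) is constant along $b^2-s^2=\mathrm{const}$ --- your transport operator $\partial_s+2s\,\partial_{b^2}$ is exactly the $\partial_v$ of the paper's change of variables $u=b^2-s^2$, $v=s$ --- and then both integrate the resulting ODE $s\pt-\p=-f(b^2-s^2)$ with the integrating factor $1/s$ (the paper's substitution $\p=s\varphi$). Your treatment of the point $s=0$ and the explicit converse check are slightly more careful than the paper's, but the route is the same.
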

\begin{proof}
Make a change of variables as
\begin{eqnarray}\label{uv}
u=b^2-s^2,\qquad v=s,
\end{eqnarray}
then $b^2=u+v^2$, $s=v$. Because
$$\pp{}{v}(\p-s\pt)=(\p-s\pt)'_1\cdot(2s)+(\p-s\pt)'_2=2s(\po-s\pot)-s\ptt=0,$$
there exists a smooth function $f(u)$ such that~$\p-s\pt=f(b^2-s^2)$. Let $\phi=s\varphi$, then we have $-s^{2}\varphi_{2}=f(b^2-s^2)$. Thus
$$\varphi=\frac{1}{s}f(b^{2}-s^{2})+2\int_0^sf'(b^2-\sigma^2)d\sigma+g(b^{2}),$$
where $g$ is a smooth function.
\end{proof}

In our problem, the function $\phi(b^2,s)$ is always positive. Using the change of variables (\ref{uv}), Equation (\ref{pde2}) can be reexpressed simpler as follows,
\begin{eqnarray}\label{pde5}
[\kappa-\mu(u+v^2)]\left(\f{1}{\sqrt{\p}}\right)_{vv}-\mu v\left(\f{1}{\sqrt{\p}}\right)_v+\mu\left(\f{1}{\sqrt{\p}}\right)-K\left(\f{1}{\sqrt{\p}}\right)^{-3}=0.
\end{eqnarray}

According to the Equation 24 of Section 2.9.2 in \cite{po}, if we set $\xi=\int\f{\ud v}{\sqrt{\kappa-\mu(u+v^2)}}$, then Equation (\ref{pde5}) becomes
$$\left(\f{1}{\sqrt{\p}}\right)_{\xi\xi}+\mu\left(\f{1}{\sqrt{\p}}\right)-K\left(\f{1}{\sqrt{\p}}\right)^{-3}=0.$$
Hence, one can obtain all the positive solutions of Equation (\ref{pde2}) by solving the above equation directly.

\section{Solutions of Equations (\ref{pde}) and (\ref{pde2}) when $\kappa\neq0$ and $\mu=0$}
If $\kappa\neq0$ and $\mu=0$, Equation (\ref{pde5}) becomes
\begin{eqnarray}\label{eqn01}
\left(\frac{1}{\sqrt{\p}}\right)_{vv}=\sigma\left(\frac{1}{\sqrt{\p}}\right)^{-3},
\end{eqnarray}
where $\sigma:=\frac{K}{\kappa}$. This equation had been solved in \cite{szm-yct-oefm}.

\begin{lemma}\cite{szm-yct-oefm}\label{lemmaa}
The non-constant solutions of Equation (\ref{eqn01}) are given by
\begin{eqnarray*}\label{sol01}
\p(u,v)=\frac{1}{p(u)\pm2\sqrt{-\sigma}v}
\end{eqnarray*}
or
\begin{eqnarray*}\label{sol02}
\p(u,v)=\frac{q(u)}{(p(u)+q(u)v)^2+\sigma},
\end{eqnarray*}
where $p(u)$ and $q(u)$ are two arbitrary smooth functions.
\end{lemma}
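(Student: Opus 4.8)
The plan is to reduce Equation~(\ref{eqn01}) to an autonomous second-order ODE in the variable $v$ and integrate it explicitly, treating $u$ as a parameter throughout. First I would introduce the unknown $w:=1/\sqrt{\p}$, so that (\ref{eqn01}) reads $w_{vv}=\sigma w^{-3}$, where the subscript denotes differentiation in $v$. Since the right-hand side carries no explicit $v$-dependence, the equation admits a first integral: multiplying by $2w_v$ and integrating in $v$ gives
\begin{eqnarray*}
(w_v)^2=C(u)-\sigma w^{-2},
\end{eqnarray*}
where $C(u)$ is an arbitrary function of the parameter $u$ alone, playing the role of the ``constant'' of integration for the $v$-ODE. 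Keeping track of the fact that every integration constant is really a function of $u$ is the recurring point of care.

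Next I would pass to $\rho:=w^2=1/\p$, for which $(\rho_v)^2=4w^2(w_v)^2=4C(u)\rho-4\sigma$, using $w^{-2}\rho=1$. Differentiating this relation once more in $v$ and dividing by $2\rho_v$ yields the very simple $\rho_{vv}=2C(u)$, so that $\rho$ is a quadratic polynomial in $v$ with $u$-dependent coefficients, $\rho=C(u)v^2+A(u)v+B(u)$. Substituting this back into $(\rho_v)^2=4C(u)\rho-4\sigma$ produces the single algebraic constraint $A(u)^2=4C(u)B(u)-4\sigma$, which fixes $B(u)$ in terms of $A(u)$ and $C(u)$.

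I would then split according to whether $C(u)$ vanishes. If $C(u)\equiv0$, the constraint forces $A(u)^2=-4\sigma$, so $A=\pm2\sqrt{-\sigma}$ is a genuine constant and $\rho=\pm2\sqrt{-\sigma}\,v+B(u)$; renaming $B(u)=p(u)$ and inverting $\p=1/\rho$ recovers the first stated form. If $C(u)\neq0$, the constraint gives $B=(A^2+4\sigma)/(4C)$, hence $\rho=C(u)\big(v+\frac{A}{2C}\big)^2+\sigma/C$; setting $q(u):=C(u)$ and $p(u):=A(u)/2$ and inverting turns this into exactly $\p=q(u)/\big[(p(u)+q(u)v)^2+\sigma\big]$, the second stated form. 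Solutions that are constant in $v$ correspond to $w_v\equiv0$, which by $w_{vv}=\sigma w^{-3}$ (and $w\neq0$) forces $\sigma=0$; these are the trivial solutions excluded by the hypothesis.

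Since all the computations are elementary, the only real obstacle is bookkeeping: the integration constants must be carried as functions $C(u),A(u),B(u)$ rather than true constants, and the final free functions $p(u),q(u)$ in the normal forms must be identified correctly with them, including the sign ambiguities and the degenerate branch $C=0$ where one constant is frozen to $\pm2\sqrt{-\sigma}$ (which simultaneously records that this branch requires $\sigma\leq0$). Alternatively, since this ODE is classical, one could instead invoke the substitution reducing (\ref{pde5}) to $(1/\sqrt{\p})_{\xi\xi}=K(1/\sqrt{\p})^{-3}$ described just above, together with the handbook solution in \cite{po}; the direct integration sketched here is self-contained and yields precisely the two families.
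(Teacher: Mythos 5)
Your proof is correct. Note, however, that the paper does not actually prove this lemma: it is quoted verbatim from \cite{szm-yct-oefm} (``This equation had been solved in \cite{szm-yct-oefm}''), and the only hint of method in the present paper is the remark in Section 5 that the substitution $\xi=\int \ud v/\sqrt{\kappa-\mu(u+v^2)}$ reduces (\ref{pde5}) to the classical equation $w_{\xi\xi}+\mu w-Kw^{-3}=0$ treated in the handbook \cite{po}. Your direct integration is therefore a self-contained replacement for that citation, and it checks out: the first integral $(w_v)^2=C(u)-\sigma w^{-2}$, the passage to $\rho=w^2=1/\p$ giving $(\rho_v)^2=4C\rho-4\sigma$ and hence $\rho_{vv}=2C(u)$ (on the set where $\rho_v\neq0$, which is all that matters once the $v$-constant branch is set aside), the algebraic constraint $A^2=4CB-4\sigma$, and the two branches $C\equiv0$ versus $C\neq0$ reproduce exactly the two stated normal forms with $p=B$ in the first and $q=C$, $p=A/2$ in the second. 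You also correctly observe that the degenerate branch forces $\sigma\leq0$ and that $w_v\equiv0$ only yields the excluded constant-in-$v$ solutions. The one point worth stating a little more carefully is the division by $\rho_v$: strictly one should argue that on any interval where the solution is non-constant the zero set of $\rho_v$ is isolated, so the polynomial form of $\rho$ extends by continuity; this is routine and does not affect the conclusion.
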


\begin{lemma}\cite{szm-yct-oefm}\label{lemmab}
When $\mu=0$ and $\kappa\neq0$, the non-constant solutions of Equations (\ref{pde}) and (\ref{pde2}) are given by
\begin{eqnarray*}\label{sol03}
\p(b^2,s)=\f{1}{2\sqrt{-\sigma}}\cdot\f{1}{\pm\sqrt{C-b^2+s^2}\pm s}
\end{eqnarray*}
or
\begin{eqnarray*}\label{sol05}
\p(b^2,s)=\f{q(u)}{q^2(u)(Dq(u)+v)^2+\sigma},
\end{eqnarray*}
where $\sigma=\frac{K}{\kappa}$, $u:=b^2-s^2$ and $v=s$, the function $q(u)\neq0$ is determined by the following equation
\begin{eqnarray*}
D^2q^4+(u-C)q^2-\sigma=0,
\end{eqnarray*}
where $C$ and $D$ are both constant numbers.
\end{lemma}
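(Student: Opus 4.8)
Lemma \ref{lemmaa} already lists every non-constant solution of Equation (\ref{pde2}) in the present case $\mu=0$ (that is, of Equation (\ref{eqn01})), carrying free functions $p(u)$ and $q(u)$. The whole task is therefore to single out, inside these two families, the members that \emph{also} satisfy Equation (\ref{pde}). The device I would use is the chain-rule identity
\[
\pp{}{v}(\p-s\pt)=s\big[2(\po-s\pot)-\ptt\big],
\]
already exploited in Section~5, which shows that (\ref{pde}) holds if and only if $\p-s\pt$ is a function of $u=b^2-s^2$ alone. Working in the variables (\ref{uv}), where $\pt=-2v\pu+\pv$, I would substitute each family into the condition $\partial_v(\p-s\pt)=0$ and read off the constraints on $p$ and $q$.

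For the first family $\p=1/(p(u)\pm2\sqrt{-\sigma}\,v)$ a short computation gives
\[
\p-s\pt=\f{p\pm4\sqrt{-\sigma}\,v-2v^2p'}{\big(p\pm2\sqrt{-\sigma}\,v\big)^2},
\]
whose $v$-derivative is proportional to $v\big((p^2)'-4\sigma\big)$ and hence vanishes identically exactly when $(p^2)'=4\sigma$. Integrating gives $p^2=4(-\sigma)(C-u)$ for a constant $C$, i.e. $p=\pm2\sqrt{-\sigma}\sqrt{C-u}$; reinserting this and using $u=b^2-s^2$, $v=s$ recovers the first stated solution.

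For the second family $\p=q(u)/\big[(p(u)+q(u)v)^2+\sigma\big]$ I would set $W:=p+qv$ and write $\p-s\pt=N(u,v)/(W^2+\sigma)^2$, with $N$ a quartic polynomial in $v$. The ratio is $v$-independent precisely when $N=f(u)(W^2+\sigma)^2$; comparing coefficients of $v^0,\dots,v^4$ fixes $f=q/(p^2+\sigma)$ from the $v^0$-term, yields the pair
\[
2q'(p^2+\sigma)+q^3=0,\qquad p'(p^2+\sigma)+pq^2=0
\]
from the $v^4$- and $v^3$-terms, and leaves the $v$- and $v^2$-terms as identities. Dividing the second equation by the first gives $qp'=2pq'$, so $p=Dq^2$ for a constant $D$; the denominator then reads $q^2(Dq+v)^2+\sigma$, which is the asserted form. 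Substituting $p=Dq^2$ into the first equation gives the separable ODE $2q'(D^2q^4+\sigma)+q^3=0$, and integrating $\int(D^2q+\sigma q^{-3})\,\ud q$ leads to $D^2q^4+(u-C)q^2-\sigma=0$ with a constant $C$.

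The one genuinely tedious step is the coefficient comparison for the second family: one must expand both the quartic $N$ and $(W^2+\sigma)^2$ and check that the $v$- and $v^2$-equations are consequences of the other two; everything else is first order and direct. To finish I would dispose of the boundary situations — $q$ forced to vanish, or $D=0$ (which merely lands in the listed family with $p\equiv0$) — and note that, since the reduction to (\ref{pde5}) was carried out for positive $\p$, both displayed functions are automatically the positive solutions sought.
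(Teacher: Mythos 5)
Your proposal is correct, and I checked the key computations: for the first family the $v$-derivative of $\phi-s\phi_{2}=\bigl(p\pm4\sqrt{-\sigma}\,v-2v^{2}p'\bigr)/\bigl(p\pm2\sqrt{-\sigma}\,v\bigr)^{2}$ is indeed proportional to $v\bigl(2pp'-4\sigma\bigr)$, giving $p^{2}=-4\sigma(C-u)$; for the second family the coefficients of $v^{4}$ and $v^{3}$ in $N=f(u)\bigl((p+qv)^{2}+\sigma\bigr)^{2}$ give exactly $2q'(p^{2}+\sigma)+q^{3}=0$ and $p'(p^{2}+\sigma)+pq^{2}=0$, the $v^{1}$ and $v^{2}$ relations are consequences, and $p=Dq^{2}$ together with $2\bigl(D^{2}q+\sigma q^{-3}\bigr)\,\mathrm{d}q=-\mathrm{d}u$ yields $D^{2}q^{4}+(u-C)q^{2}-\sigma=0$. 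Note that the paper itself offers no proof of this lemma at all — it is simply cited from \cite{szm-yct-oefm} — so your argument is a genuine addition rather than a paraphrase; moreover it follows precisely the strategy the authors themselves employ in the parallel case $\kappa=0$, $\mu\neq0$ (Lemmas \ref{lemmad} and \ref{leme}): take the general solution of the curvature ODE and impose (\ref{pde}) through the first integral $\phi-s\phi_{2}=f(b^{2}-s^{2})$ in the variables $(u,v)$. The only loose end is the degenerate branch $p^{2}+\sigma\equiv0$ in the second family, where your determination of $f$ from the $v^{0}$-coefficient breaks down; there $(p+qv)^{2}+\sigma=qv(2p+qv)$ and $N$ retains a nonzero $v^{1}$-term $4pq^{2}v$ against a denominator vanishing to order $v^{2}$, so no new solutions arise — worth one sentence, but not a gap in substance.
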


\section{Solutions of Equations (\ref{pde}) and (\ref{pde2}) when $\kappa=0$ and $\mu\neq0$}
If $\kappa=0$ and $\mu\neq0$, Equation (\ref{pde5}) is reduced to the following form
\begin{eqnarray}\label{pde6}
(u+v^{2})f_{vv}+v f_{v}-f-\tau f^{-3}=0,
\end{eqnarray}
where $f:=\frac{1}{\sqrt{\phi}}$ and $\tau:=-\frac{K}{\mu}$.
\begin{lemma}\label{lemmac}
The non-constant solutions of Equation (\ref{pde6}) are given by
\begin{eqnarray}\label{solution5}
\phi(u,v)=\frac{2q(u)(\sqrt{u+v^{2}}\pm v)^{2}}{[q(u)(\sqrt{u+v^{2}}\pm v)^{2}+p(u)]^2+\tau}.
\end{eqnarray}
where $p(u)$ and $q(u)$ are two arbitrary functions.
\end{lemma}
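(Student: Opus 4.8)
The plan is to reduce Equation (\ref{pde6}) to an autonomous second-order ODE in a new independent variable, integrate it once, and then linearize the result. First I would carry out the substitution already flagged in Section 5 (following \cite{po}): since $\kappa=0$ forces $\mu<0$ by Lemma \ref{c(x)}, the variable
\[
\xi=\int\f{\ud v}{\sqrt{-\mu(u+v^2)}}=\f{1}{\sqrt{-\mu}}\ln\big(v+\sqrt{u+v^2}\big)
\]
is well defined, and it satisfies $e^{2\sqrt{-\mu}\,\xi}=(\sqrt{u+v^2}+v)^2$. A direct computation shows that the contribution of $\xi_{vv}$ exactly cancels the first-order term $vf_v$, so that (\ref{pde6}) collapses to the autonomous equation $f_{\xi\xi}+\mu f-Kf^{-3}=0$, in agreement with the reduction quoted in Section 5. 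Throughout, $u$ enters only as a parameter, so each ``constant'' of integration below is an arbitrary function of $u$.

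Because this equation is autonomous it admits a first integral: multiplying by $f_\xi$ and integrating gives
\[
(f_\xi)^2+\mu f^2+Kf^{-2}=C_1(u).
\]
This is an Ermakov--Pinney-type equation, for which $w:=f^2=\f{1}{\p}$ is the natural unknown. Using $(f_\xi)^2=\f{(w_\xi)^2}{4w}$ turns the first integral into $(w_\xi)^2=-4\mu w^2+4C_1w-4K$; differentiating once removes the square root and produces the linear equation $w_{\xi\xi}+4\mu w=2C_1$. Writing $k:=\sqrt{-\mu}$, its general solution is $w=A(u)e^{2k\xi}+B(u)e^{-2k\xi}-\f{C_1}{2k^2}$, and re-substituting this into the first integral forces the single algebraic relation $16k^2AB=\f{C_1^2}{k^2}+4K$ among $A$, $B$ and $C_1$.

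Finally I would translate back through $e^{2k\xi}=(\sqrt{u+v^2}+v)^2=:X$ and reorganize. Setting $q:=2A$ and $p:=-\f{C_1}{2k^2}$ gives $w=\f{q}{2}X+p+\f{B}{X}$, and the constraint together with $k^2=-\mu$, $\tau=-K/\mu$ simplifies to $B=\f{p^2+\tau}{2q}$, whence
\[
\f{1}{\p}=w=\f{(qX+p)^2+\tau}{2qX}.
\]
Inverting this is precisely (\ref{solution5}) with $p(u),q(u)$ arbitrary, the two signs in $\sqrt{u+v^2}\pm v$ arising from the two branches of the square root defining $\xi$ (equivalently, from replacing $X$ by $u^2/X$). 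The main obstacle I anticipate is purely bookkeeping: confirming that the first-integral constraint collapses to the clean relation $B=(p^2+\tau)/(2q)$ rather than imposing an extra restriction, so that $p$ and $q$ genuinely remain free functions --- this is what guarantees that (\ref{solution5}) captures \emph{all} solutions. I would also record separately the degenerate case $f_\xi\equiv0$, which yields only $f^4=K/\mu$ and hence a constant $\p$, thereby justifying the ``non-constant'' qualifier in Lemma \ref{lemmac}.
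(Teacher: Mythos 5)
Your argument is correct and does capture all non-constant solutions, but the second integration runs along a genuinely different track from the paper's. The paper never passes to the variable $\xi$: it reads (\ref{pde6}) directly as an ODE in $v$ with $u$ as a parameter, multiplies by $f_v$ and integrates to get $(u+v^2)(f_v)^2=f^2-\tau f^{-2}-2p(u)$ --- which is precisely your first integral $(f_\xi)^2+\mu f^2+Kf^{-2}=C_1(u)$ rewritten in the original variable, with $p=C_1/(2\mu)$ --- and then finishes by separation of variables, evaluating $\int f\,\ud f/\sqrt{f^4-2pf^2-\tau}$ in closed form to obtain $f^2+\sqrt{f^4-2pf^2-\tau}=q(u)(\sqrt{u+v^2}\pm v)^2+p(u)$ and solving this algebraically for $f^2=1/\phi$. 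You instead exploit the Ermakov--Pinney structure: after the same first integral you pass to $w=f^2$, differentiate to obtain the linear equation $w_{\xi\xi}+4\mu w=2C_1$, solve it, and recover the information lost in differentiating by substituting back, which yields the constraint $16k^2AB=C_1^2/k^2+4K$, i.e.\ $B=(p^2+\tau)/(2q)$. Both routes end with exactly two free functions of $u$; your linearization avoids the explicit quadrature and makes it transparent that the back-substitution imposes only the single algebraic relation rather than restricting $p$ and $q$, while the paper's separation of variables is shorter. One small item worth recording in your write-up: the formula $B=(p^2+\tau)/(2q)$ presupposes $A=q/2\neq0$; the degenerate family $A\equiv0$ (which forces $p^2+\tau=0$) is recovered from the opposite sign branch via $(\sqrt{u+v^2}-v)^2=u^2/X$, exactly as your closing remark about the two branches of $\xi$ indicates, so no solutions are lost.
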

\begin{proof}
Regard Equation (\ref{pde6}) as an ODE of $v$. If $f_{v}=0$, then $f$ must be a constant. If $f_{v}\neq 0$, then multiplying the both sides of (\ref{pde6}) by $f_{v}$ and integrating with respect to $v$ yields
\begin{eqnarray*}
(u+v^{2})(f_{v})^{2}=2\int(f+\tau f^{-3})\,\ud f=f^{2}-\tau f^{-2}-2p(u),
\end{eqnarray*}
where $p(u)$ is an arbitrary function of $u$. Since $u+v^{2}>0$ in our problem, by the above equality we have
 \begin{eqnarray*}
 \frac{\ud f}{\sqrt{f^{2}-\tau f^{-2}-2p(u)}}=\pm \frac{\ud v}{\sqrt{u+v^{2}}}.
 \end{eqnarray*}
So
 \begin{eqnarray*}
 f^{2}+\sqrt{f^{4}-2p(u)f^{2}-\tau}=q(u)(\sqrt{u+v^{2}}\pm v)^{2}+p(u),
 \end{eqnarray*}
 where $q(u)$ is an arbitrary function of $u$. Hence, $\phi$ is given by (\ref{solution5}).
\end{proof}

\begin{lemma}\label{lemmad}
When $\kappa=0$ and $\mu\neq0$, the non-constant solutions of Equation (\ref{pde}) and (\ref{pde2}) are given by (\ref{solution5}), where $p(u)$ and $q(u)$ satisfy an ODE system as follows:
\begin{eqnarray}
&uq^2p'+(p^2+\tau)q'=0,&\label{lem1}\\
&qp'-2pq'-uqq'-2q^2=0.&\label{lem2}
\end{eqnarray}
\end{lemma}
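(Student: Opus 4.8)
By Lemma~\ref{lemmac}, every non-constant solution of Equation~(\ref{pde2}) (equivalently~(\ref{pde6})) is already at hand: it is the function~(\ref{solution5}) for some pair $p(u),q(u)$, which at this stage is free. The entire content of the lemma is therefore to decide for which pairs $(p,q)$ this $\phi$ \emph{also} satisfies Equation~(\ref{pde}). My plan is to substitute~(\ref{solution5}) into Equation~(\ref{pde}) and to show that the resulting condition on $(p,q)$ is exactly the system~(\ref{lem1})--(\ref{lem2}); since the computation is an equivalence, this settles both directions of the lemma simultaneously.

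First I would recast Equation~(\ref{pde}) in the variables~(\ref{uv}). From $\phi_1=\phi_u$ and $\phi_2=\phi_v-2v\phi_u$ one gets $\phi_{vv}-2v\phi_{uv}-4\phi_u=0$, which, exactly as in the first lemma of Section~5, is equivalent to saying that $\phi-s\phi_2$ depends on $u$ only. I would work with this first-order reformulation. The decisive simplification comes from the substitution $t:=\sqrt{u+v^2}\pm v$ already used in Lemma~\ref{lemmac}, under which $\phi=\Phi(u,t)$ with $\Phi=2qt^2/[(qt^2+p)^2+\tau]$. A short computation gives $\sqrt{u+v^2}=t\mp v$ and the identity $t^2+u=2t\sqrt{u+v^2}$, so that the chain-rule factors collapse and one obtains the clean relation $\phi_2=\pm\Phi_t-2v\Phi_u$. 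Hence
\[
\phi-s\phi_2=\Phi\mp v\,\Phi_t+2v^2\Phi_u,\qquad v=\frac{t^2-u}{2t}.
\]

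It then remains to impose that this expression depend on $u$ only. Substituting the explicit $\Phi$ and clearing the denominator $[(qt^2+p)^2+\tau]^2$ turns the condition into a polynomial identity in $t$ with coefficients built from $u,p,q,p',q'$. To see transparently why precisely \emph{two} ODEs appear, I would expand $\phi-s\phi_2$ instead in the form $A(v)\pm B(v)\sqrt{u+v^2}$ with $A,B$ rational in $v$: since $\sqrt{u+v^2}\notin\RR(v)$, the demand that $A\pm B\sqrt{u+v^2}$ be constant in $v$ forces $B\equiv0$ and $A_v\equiv0$ separately, and after cancellation these two demands should reduce to~(\ref{lem1}) and~(\ref{lem2}).

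The principal obstacle is this last reduction: the numerator is a polynomial of high degree in $t$ (or $v$), so the algebra is heavy, and one must verify that the several coefficient relations are mutually consistent and collapse to exactly the two stated ODEs rather than to an over-determined system. Two side points deserve care: that both signs in $t=\sqrt{u+v^2}\pm v$ yield the same system, and that the degenerate constant solutions are correctly excluded. As a check on the final system I would test it against the explicit metric $\phi=(b+s)^2$ of the introduction, which corresponds to $\tau=0$, $p\equiv0$, $q=2/u^2$: one verifies directly that this pair satisfies~(\ref{lem1})--(\ref{lem2}).
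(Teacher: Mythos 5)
Your plan is essentially the paper's own proof: pass to the variables $(u,v)$, substitute (\ref{solution5}) into the transformed form of (\ref{pde}), and use the irrationality of $\sqrt{u+v^2}$ over the rational functions of $v$ to split the resulting identity into a rational part and an irrational part that must vanish separately; your first-order reformulation $\phi-s\phi_2=f(u)$ is only a cosmetic variant of substituting into $\phi_{vv}-2v\phi_{uv}-4\phi_u=0$ directly. The one step you flag as the principal obstacle --- that the several coefficient conditions collapse to exactly (\ref{lem1}) and (\ref{lem2}) rather than an over-determined system --- is exactly what the paper's Maple-assisted computation settles by exhibiting all coefficients $A_0,\dots,A_6$ as combinations of the two quantities $qp'-2pq'-uqq'-2q^2$ and $uq^2p'+(p^2+\tau)q'$, so your approach goes through once that algebra is carried out.
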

\begin{proof}
Using the change of variables (\ref{uv}), Equation (\ref{pde}) becomes
\begin{eqnarray*}
\p_{vv}-2v\p_{uv}-4\p_u=0.
\end{eqnarray*}
When $\phi(u,v)=\frac{2q(u)(\sqrt{u+v^{2}}- v)^{2}}{[q(u)(\sqrt{u+v^{2}}- v)^{2}+p(u)]^2+\tau}$, with the help of Maple we know that the above equation is equivalent to the following equation
\begin{eqnarray}\label{pquv}
A_{6}(u)v^{6}+A_{4}(u)v^{4}+A_{2}(u)v^{2}+A_{0}(u)+\sqrt{u+v^{2}}\left\{A_{5}(u)v^{5}+A_{3}(u)v^{3}+A_{1}(u) v\right\}=0,
\end{eqnarray}
where
\begin{eqnarray*}
A_{6}(u)&=&-A_{5}(u)=-32 q^{3}(q p'-2p q'-u q q'-2q^{2}),\\
A_{4}(u) &=& \frac{3}{2}u A_{6}(u)+M,\\
A_{3}(u) &=& u A_{5}-M,\\
A_{2}(u) &=& \frac{3}{16q^2}(3u^{2}q^2-p^2-\tau)A_{6}+\frac{1}{2q}(2uq+p)M,\\
A_{1}(u) &=& \frac{3}{16q^2}(u^{2}q^2-p^2-\tau)A_{5}-\frac{1}{2q}(uq+p)M,\\
A_{0}(u) &=& \frac{1}{32q^3}[u^{3}q^3-(3uq+2p)(p^2+\tau)]A_{6}+\frac{1}{24 q^{2}}(3u^2q^2+6upq+3p^2-\tau)M,
\end{eqnarray*}
and $M:=24q^{2}[u q^{2}p'+(p^{2}+\tau)q']$.

Since $\sqrt{u+v^2}$ is irrational with respect to $v$ and the remaining parts of Equation (\ref{pquv}) are rational, Equation (\ref{pquv}) holds if and only if
$$A_{6}(u)v^{6}+A_{4}(u)v^{4}+A_{2}(u)v^{2}+A_{0}(u)=0,\qquad A_{5}(u)v^{5}+A_{3}(u)v^{3}+A_{1}(u) v=0.$$
As a result, $A_i(u)=0$ for $1\leq i\leq6$, which are equivalent to Equations (\ref{lem1}) and (\ref{lem2}).

We can obtain the same equations similarly when $\phi(u,v)=\frac{2q(u)(\sqrt{u+v^{2}}+ v)^{2}}{[q(u)(\sqrt{u+v^{2}}+ v)^{2}+p(u)]^2+\tau}$.
\end{proof}

\begin{lemma}\label{leme}
The solutions of Equations (\ref{lem1}) and (\ref{lem2}) with $q(u)\not\equiv0$ are given by
\begin{eqnarray}\label{solutionpone}
p(u)=\pm\sqrt{-\tau},\qquad q(u)=\pm\f{(C\pm\sqrt{C^2+8pu})^2}{4u^2}
\end{eqnarray}
or
\begin{eqnarray}
p(u)&=&\pm\sqrt{\f{-(C^2-D)\tau-C(C\tau-2u)\pm\sqrt{D(C\tau-2u)^2-D(C^2-D)\tau^2}}{2(C^2-D)}},\label{ppp}\\
q(u)&=&\f{p^2+\tau-upp'\pm\sqrt{(p^2+\tau-upp')^2-(p^2+\tau)u^2p'}}{u^2p'},\label{qqq}
\end{eqnarray}
where $C$ and $D$ are constants.
\end{lemma}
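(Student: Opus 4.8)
The plan is to solve the coupled first–order ODE system (\ref{lem1})–(\ref{lem2}) for the unknown functions $p(u)$ and $q(u)$. The key structural observation is that (\ref{lem1}), namely $uq^2p'+(p^2+\tau)q'=0$, relates the two derivatives $p'$ and $q'$ without any other occurrence of derivatives, so I would first use it to express one derivative in terms of the other. Explicitly, assuming $q'\neq0$ generically, (\ref{lem1}) gives $q'=-\frac{uq^2 p'}{p^2+\tau}$, and I would substitute this into (\ref{lem2}) to obtain a single relation among $p$, $q$, $u$ and $p'$ only. This eliminates $q'$ and should produce, after clearing the denominator $p^2+\tau$, a quadratic equation in $q$ whose coefficients involve $p$, $u$ and $p'$; solving that quadratic by the usual formula yields precisely the expression (\ref{qqq}) for $q(u)$ in terms of $p(u)$ and its derivative.

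Once $q$ is written as a function of $p$, $p'$ and $u$, the remaining task is to pin down $p(u)$ itself. I would handle the degenerate branch first: if $p'\equiv0$, then (\ref{lem1}) forces $(p^2+\tau)q'=0$, so either $p^2+\tau=0$, giving the constant value $p=\pm\sqrt{-\tau}$, or $q'\equiv0$; the constant-$p$ branch $p=\pm\sqrt{-\tau}$ then reduces (\ref{lem2}) to a single ODE for $q$ alone, namely $-uqq'-2q^2=0$ after dropping the $p'$ term, which integrates to the closed form displayed in (\ref{solutionpone}) with the constant $C$ of integration (the $\pm\sqrt{C^2+8pu}$ structure suggesting an intermediate algebraic equation $u^2q$ quadratic in $\sqrt{q}$ or similar). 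This accounts for the first family of solutions.

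For the generic branch $p'\not\equiv0$, I would back-substitute the formula (\ref{qqq}) for $q$ into the original system to derive a closed ODE satisfied by $p(u)$ alone. Concretely, differentiating (\ref{qqq}) and forcing consistency with (\ref{lem1}) should yield a first–order ODE for $p$ that is separable or at least reducible to an algebraic relation after one integration, introducing the second constant $D$. The expected outcome is that $p^2$ satisfies a quadratic whose solution is exactly (\ref{ppp}), with the discriminant $D(C\tau-2u)^2-D(C^2-D)\tau^2$ emerging from completing the square. The main obstacle is this last elimination: carrying the square-root expression (\ref{qqq}) through the differentiation and substitution is algebraically heavy, and one must verify that the resulting condition is equivalent to a clean algebraic equation for $p^2$ rather than a genuinely transcendental ODE. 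I would lean on a symbolic computation (as the authors do with Maple in Lemma \ref{lemmad}) to confirm that all the irrational terms cancel and that the two constants $C$, $D$ parametrize the full solution set, taking care to track the $\pm$ branch choices so that every sign combination in (\ref{solutionpone})–(\ref{qqq}) is justified and no spurious solutions are introduced.
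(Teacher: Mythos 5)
Your opening move is sound and coincides with the paper's: eliminating $q'$ from (\ref{lem2}) via (\ref{lem1}) produces exactly the quadratic $u^2p'q^2-2(p^2+\tau-upp')q+(p^2+\tau)p'=0$ (the paper obtains the same relation as the combination (\ref{lem1})$\times(uq+2p)+$(\ref{lem2})$\times(p^2+\tau)$), and the quadratic formula then gives (\ref{qqq}). But the two remaining steps both have problems. In the degenerate branch $p'=0$ you reduce (\ref{lem2}) to $-uqq'-2q^2=0$; this is wrong, because the term $-2pq'$ does not vanish: $p=\pm\sqrt{-\tau}$ is a nonzero constant whenever $\tau\neq0$. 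The correct reduced equation is $2pq'+uqq'+2q^2=0$, and it is precisely the surviving $2pq'$ term that produces the $8pu$ under the root in (\ref{solutionpone}). Your equation $uq'+2q=0$ integrates to $q=C/u^2$, which is not the claimed solution. The paper integrates the correct equation by recognizing it as the exact form $\bigl(u\sqrt{q}\bigr)'=2p\bigl(1/\sqrt{q}\bigr)'$, whence $uq=C\sqrt{q}+2p$, a quadratic in $\sqrt{q}$.

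The generic branch $p'\neq0$ is where the real content of the lemma lies, and your plan for it both mispredicts the structure and supplies no method of integration. Substituting (\ref{qqq}) back into (\ref{lem1}) does not yield a first-order separable ODE for $p$: since $q$ depends on $p'$ and (\ref{lem1}) contains $q'$, the result is the \emph{second-order} equation $(p^2+\tau)^2p''+(p^2+\tau)p(p')^2+2\tau u(p')^3=0$, which is genuinely nonlinear in $p$ and is not disposed of by "one integration introducing $D$". The missing idea is a hodograph-type exchange of dependent and independent variables: regarding $u$ as a function of $p$ turns this into the \emph{linear} equation $u''-\f{p}{p^2+\tau}u'-\f{2\tau}{(p^2+\tau)^2}u=0$, whose general solution $u=C(p^2+\tau)\pm\sqrt{D}\,p\sqrt{p^2+\tau}$ is an algebraic relation that can then be inverted for $p^2$ to give (\ref{ppp}). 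Deferring the integration to Maple does not fill this gap: symbolic verification that a given formula satisfies an ODE is one thing, but the lemma asserts these are \emph{all} the solutions, and that requires actually exhibiting the general integral as the paper does.
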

\begin{proof}
(\ref{lem1})$\times(uq+2p)+$(\ref{lem2})$\times(p^2+\tau)$ yields
\begin{eqnarray}\label{q2}
u^2p'q^2-2(p^2+\tau-upp')q+(p^2+\tau)p'=0.
\end{eqnarray}

When $p'=0$, then $p=\pm\sqrt{-\tau}$ by (\ref{lem1}). In this case, (\ref{lem2}) is equivalent to
$$\left(u\sqrt[4]{q^2}\right)'=2p\left(\f{1}{\sqrt[4]{q^{2}}}\right)',$$
so
$$u\sqrt[4]{q^2}=2p\cdot\f{1}{\sqrt[4]{q^2}}+C$$
for some constant $C$, which leads to the solutions (\ref{solutionpone}).

When $p'\neq0$, then by (\ref{q2})
$$q=\f{p^2+\tau}{p^2+\tau-upp'\pm\sqrt{(p^2+\tau-upp')^2-(p^2+\tau)u^2p'}}.$$
Putting the above equality into (\ref{lem1}) yields
\begin{eqnarray*}
(p^2+\tau)^2p''+(p^2+\tau)p(p')^2+2\tau u(p')^3=0.
\end{eqnarray*}
Regard $u$ as the function of $p$, then the above equation turns to be
$$u''-\f{p}{p^2+\tau}u'-\f{2\tau}{(p^2+\tau)^2}u=0,$$
and its solutions are given below,
$$u=C(p^2+\tau)\pm\sqrt{D}p\sqrt{p^2+\tau},$$
where $C$ and $D$ are constants, and hence $p$ can be solved and given by (\ref{ppp}). Notice that the constant $D$ here can be negative.
\end{proof}

\section{Proof of Theorem \ref{main2} and some regular examples}
\begin{proof}[Proof of Theorem \ref{main2}]
It is true by Theorem \ref{main1} and Lemma \ref{lemmab}.
\end{proof}

Example \ref{ex1}-\ref{ex4} show four typical kinds of regular general $\ab$-metrics in our problem, and Example \ref{ex5} shows that we can also give the analytic expressions in the case $\kappa\neq0$ and $\mu\neq0$.
\begin{example}\label{ex1}
Take $\mu=0$, $\lambda=1$ in (\ref{ab}) and $\sigma=-\frac{1}{4}$, $C=1$ in (\ref{solution1}), then
$$\phi(b^2,s)=\frac{\sqrt{1-b^2+s^2}}{1-b^2}\pm\frac{s}{1-b^2},$$
and the corresponding general $\ab$-metrics
$$F=\f{\sqrt{(1-\xx-2\langle a,x\rangle-|a|^2)\yy+(\xy+\langle a,x\rangle)^2}}{1-\xx-2\langle a,x\rangle-|a|^2}
\pm\f{\xy+\langle a,x\rangle}{1-\xx-2\langle a,x\rangle-|a|^2}$$
are locally projectively flat with constant flag curvature $K=-\frac{1}{4}$. Actually, they are just the generalized Funk metrics (\ref{funk}) expressed in some other local coordinate system.
\end{example}

\begin{example}\label{ex2}
Take $\mu=0$, $\lambda=1$ in (\ref{ab}) and $\sigma=0$, $C=D=1$ in (\ref{solution2}), then parts of the solutions of (\ref{solution2}) are given by
$$\phi(b^2,s)=\frac{1}{\sqrt{1-b^2+s^2}(\sqrt{1-b^2+s^2}\pm s)^2}=\f{(\sqrt{1-b^2+s^2}\mp s)^2}{(1-b^2)^2\sqrt{1-b^2+s^2}},$$
and the corresponding general $\ab$-metrics
$$F=\frac{\{\sqrt{(1-\xx-2\langle a,x\rangle-|a|^2)\yy+(\xy+\langle a,y\rangle)^2}\mp(\xy+\langle a,y\rangle)\}^2}{(1-\xx-2\langle a,x\rangle-|a|^2)^2\sqrt{(1-\xx-2\langle a,x\rangle-|a|^2)\yy+(\xy+\langle a,y\rangle)^2}}$$
are locally projectively flat with constant flag curvature $K=0$. Actually, they are just the generalized Berwald's metrics (\ref{berwald}) expressed in some other local coordinate system.
\end{example}

\begin{example}\label{ex3}
Take $\mu=0$, $\lambda=1$ in (\ref{ab}) and $\sigma=1$, $C=D=1$ in (\ref{solution2}), then one solution of (\ref{solution2}) is given by
$$\phi(b^2,s)=\Re\frac{1}{\sqrt{1+2i+b^2-s^2}+is},$$
and the corresponding general $\ab$-metrics
$$F=\Re\frac{\yy}{\sqrt{(1+2i+\xx+2\langle a,x\rangle+|a|^2)\yy-(\xy+\langle a,y\rangle)^2}+i(\xy+\langle a,y\rangle)}$$
are locally projectively flat with constant flag curvature $K=1$. They are parts of Bryant's metrics\cite{Br2,yct-zhm-onan}.
\end{example}

\begin{example}\label{ex4}
Take $\mu=0$, $\lambda=1$ in (\ref{ab}) and $\sigma=-1$, $C=\frac{1}{2}\left(1+\frac{1}{\varepsilon^2}\right)$, $D=\frac{1}{4}\left(1-\frac{1}{\varepsilon^2}\right)$ where $0<|\varepsilon|<1$ in (\ref{solution2}), then part of the solutions of (\ref{solution2}) is given by
$$\phi(b^2,s)=\f{1}{2}\left\{\frac{\sqrt{1-b^2+s^2}+s}{1-b^2}
-\frac{\varepsilon\sqrt{1-\varepsilon^2b^2+\varepsilon^2s^2}+\varepsilon^2s}{1-\varepsilon b^2}\right\},$$
and the corresponding general $\ab$-metrics
\begin{eqnarray*}
F&=&\f{1}{2}\Bigg\{\frac{\sqrt{(1-\xx-2\langle a,x\rangle-|a|^2)\yy+(\xy+\langle a,y\rangle)^2}+\xy+\langle a,y\rangle}{1-\xx-2\langle a,x\rangle-|a|^2}\\
&&-\frac{\varepsilon\sqrt{[1-\varepsilon^2(\xx+2\langle a,x\rangle+|a|^2)]\yy+\varepsilon^2(\xy+\langle a,y\rangle)^2}+\varepsilon^2(\xy+\langle a,y\rangle)}{1-\varepsilon^2(\xx+2\langle a,x\rangle+|a|^2)}\Bigg\}
\end{eqnarray*}
are locally projectively flat with constant flag curvature $K=-1$. They include Shen's metrics of \cite{szm-pffm} as (39) in it.
\end{example}

\begin{example}\label{ex5}
Let $\a$ and $\b$ be data satisfying (\ref{conditions}) with $\mu\neq0$ and $\kappa\neq0$. According to Lemma 7.1 in \cite{szm-yct-oefm}, the following function
\begin{eqnarray*}
\phi(b^2,s):=\f{\sqrt{|\mu|}\sqrt{\kappa-\mu b^2+\mu s^2}}{\kappa-\mu b^2}\bar\phi\left(\f{\mu b^2}{\kappa-\mu b^2}\frac{\mu}{\kappa},\; \f{|\mu| s}{\sqrt{\kappa-\mu b^2}\sqrt{\kappa-\mu b^2+\mu s^2}}\right)
\end{eqnarray*}
satisfies (\ref{pde}) and (\ref{pde2}) if and only if $\bar\phi(b^2,s)$ is one of the functions given in (\ref{solution1}) or (\ref{solution2}). Hence, by Theorem \ref{main1} we know that the corresponding general $\ab$-metric $F=\a\phi(b^2,\frac{\b}{\a})$ is locally projectively flat with constant flag curvature $K$. By the arguments in Section 3, these metrics are just the metrics in Theorem \ref{main2} given in a different form.
\end{example}

\section{Proof of Theorem \ref{main3} and some non-regular examples}
\begin{proof}[Proof of Theorem \ref{main3}]
It is true by Theorem \ref{main1},Lemma \ref{lemmad} and Lemma \ref{leme}.
\end{proof}

Let $\a$ and $\b$ are given by (\ref{ab}). According to (\ref{km}), $\a$ and $\b$ satisfy (\ref{conditions}) with $\kappa=0$ if and only if
\begin{eqnarray}\label{la}
\lambda^2+\mu|a|^2=0.
\end{eqnarray}
In this case, the length of $\b$ is given by
$$b=\f{|\lambda-\mu\langle a,x\rangle|}{\sqrt{-\mu}\cdot\sqrt{1-\mu|x|^2}}.$$

As a application of Theorem \ref{main3}, some typical general $\ab$-metrics with constant flag curvature are analytic constructed below. Note that all of them are of some singularity.
\begin{center}
$$\mathbf K=0$$
\end{center}
\begin{example}
Let $\a$ and $\b$ be data in (\ref{ab}) with an additional condition (\ref{la}) and take $\tau=0$, $p(u)=0$, $q(u)=\frac{2}{u^2}$ in (\ref{pppp}), then one solution of (\ref{solution3}) is given by
$$\phi(b^2,s)=(b+s)^2,$$
and the corresponding general $\ab$-metrics
$$F=\f{\left\{|\lambda-\mu\langle a,x\rangle|\sqrt{(1+\mu|x|^2)|y|^2-\mu\langle x,y\rangle^2}+\sqrt{-\mu}\left(\lambda\langle x,y\rangle+(1+\mu|x|^2)\langle a,y\rangle-\mu\langle a,x\rangle\langle x,y\rangle\right)\right\}^2}{-\mu(1+\mu|x|^2)^2\sqrt{(1+\mu|x|^2)|y|^2-\mu\langle x,y\rangle^2}}$$
are locally projectively flat with vanishing flag curvature.
\end{example}

\begin{example}
Let $\a$ and $\b$ be data in (\ref{ab}) with an additional condition (\ref{la}) and take $\tau=0$, $p(u)=\frac{\sqrt{u}}{2}$, $q(u)=\frac{1}{2\sqrt{u}}$ in (\ref{pp}) and (\ref{qq}), then parts of the solutions of (\ref{solution3}) are given by
$$\phi(b^2,s)=\f{\sqrt{b^2-s^2}}{b^2}$$
and the corresponding general $\ab$-metrics
$$F=\f{\sqrt{b^2\a^2-\b^2}}{b^2}$$
are locally projectively flat with vanishing flag curvature. Actually, $F$ is a positive semi-definite Riemannian metric of signature $(n-1,0)$.
\end{example}

\begin{example}
Let $\a$ and $\b$ be data in (\ref{ab}) with an additional condition (\ref{la}) and take $\tau=0$, $p(u)=c_1\frac{\sqrt{1+c_1u}}{2}$, $q(u)=\frac{\sqrt{1+c_1u}\left(1+c_2\sqrt{1+c_1u}\right)^2}{2u^2}$ where $c_1=\pm1$, $c_2=\pm1$ in (\ref{pp}) and (\ref{qq}), then parts of the solutions of (\ref{solution3}) are given by
$$\phi(b^2,s)=\f{\left\{1+c_2\sqrt{1+c_1(b^2-s^2)}\right\}^2(b+s)^2}
{\sqrt{1+c_1(b^2-s^2)}\left\{1+c_1b(b+s)+c_2\sqrt{1+c_1(b^2-s^2)}\right\}},$$
and the corresponding general $\ab$-metrics are locally projectively flat with vanishing flag curvature.
\end{example}
\newpage
\begin{center}
$$\mathbf K=-1$$
\end{center}
\begin{example}
Let $\a$ and $\b$ be data in (\ref{ab}) with an additional condition (\ref{la}) and take $\tau=-1$, $p(u)=c_1$, $q(u)=\f{2\left(1+c_2\sqrt{1+c_1u}\right)^2}{u^2}$ where $c_1=\pm1$, $c_2=\pm1$ in (\ref{pppp}), then parts of the solutions of (\ref{solution3}) are given by
$$\phi(b^2,s)=\f{(b+s)^2}{2\left\{1+c_1b(b+s)+c_2\sqrt{1+c_1(b^2-s^2)}\right\}},$$
and the corresponding general $\ab$-metrics are locally projectively flat with flag curvature $K=-1$.
\end{example}

\begin{example}
Let $\a$ and $\b$ be data in (\ref{ab}) with an additional condition (\ref{la}) and take $\tau=-1$, $p(u)=\sqrt{1+c_1u}$, $q(u)=c_1\f{1}{\sqrt{1+c_1u}+c_2}$ where $c_1=\pm1$, $c_2=\pm1$ in (\ref{pp}) and (\ref{qq}), then parts of the solutions of (\ref{solution3}) are given by
$$\phi(b^2,s)=\f{b^2-s^2}{2b\left\{b\sqrt{1+c_1(b^2-s^2)}+c_2s\right\}},$$
and the corresponding general $\ab$-metrics are locally projectively flat with flag curvature $K=-1$.
\end{example}

\begin{example}
Let $\a$ and $\b$ be data in (\ref{ab}) with an additional condition (\ref{la}) and take $\tau=-1$, $p(u)=\f{1}{\sqrt{2}}\sqrt{1+c\sqrt{1-u^2}}$, $q(u)=-\f{\sqrt{2}\sqrt{1+c\sqrt{1-u^2}}+u}{\sqrt{2}\left(1+c\sqrt{1-u^2}\right)^\frac{3}{2}}$ where $c=\pm1$ in (\ref{pp}) and (\ref{qq}), then parts of the solutions of (\ref{solution3}) are given by
$$\phi(b^2,s)=\f{2\sqrt{2}\left(\sqrt{2}\sqrt{1+c\sqrt{1-(b^2-s^2)^2}}+b^2-s^2\right)\left(1+c\sqrt{1-(b^2-s^2)^2}\right)^\frac{3}{2}
\left(b+s\right)^2}{2\left(1+c\sqrt{1-(b^2-s^2)^2}\right)^3
-\left\{\left(\sqrt{2}\sqrt{1+c\sqrt{1-(b^2-s^2)^2}}+b^2-s^2\right)\left(b+s\right)^2-\left(1+c\sqrt{1-(b^2-s^2)^2}\right)^2\right\}^2},$$
and the corresponding general $\ab$-metrics are locally projectively flat with flag curvature $K=-1$.
\end{example}

\begin{center}
$${\mathbf K=1}$$
\end{center}
\begin{example}
Let $\a$ and $\b$ be data in (\ref{ab}) with an additional condition (\ref{la}) and take $\tau=1$, $p(u)=\f{1}{\sqrt{2}}\sqrt{\sqrt{1+u^2}-1}$, $q(u)=\f{2\sqrt{\sqrt{1+u^2}-1}+\sqrt{2}u}{2\left(\sqrt{1+u^2}-1\right)^\frac{3}{2}}$ in (\ref{pp}) and (\ref{qq}), then one solution of (\ref{solution3}) is given by
$$\phi(b^2,s)=\f{2\sqrt{2}\left(\sqrt{2}\sqrt{\sqrt{1+(b^2-s^2)^2}-1}+b^2-s^2\right)\left(\sqrt{1+(b^2-s^2)^2}-1\right)^\frac{3}{2}
\left(b+s\right)^2}{2\left(\sqrt{1+(b^2-s^2)^2}-1\right)^3
+\left\{\left(\sqrt{2}\sqrt{\sqrt{1+(b^2-s^2)^2}-1}+b^2-s^2\right)\left(b+s\right)^2+\left(\sqrt{1+(b^2-s^2)^2}-1\right)^2\right\}^2},$$
and the corresponding general $\ab$-metrics are locally projectively flat with flag curvature $K=1$.
\end{example}

\noindent Changtao Yu\\
School of Mathematical Sciences, South China Normal
University, Guangzhou, 510631, P.R. China\\
aizhenli@gmail.com
\newline
\newline
\newline
\noindent Hongmei Zhu\\
College of Mathematics and Information Science, Henan Normal University, Xinxiang, 453007, P.R. China\\
zhm403@163.com

\end{document}